\numberwithin{equation}{section}
\newtheorem{thm}{Theorem}[section]
\newtheorem{lma}[thm]{Lemma}
\newtheorem{cor}[thm]{Corollary}
\newtheorem{defn}[thm]{Definition}
\renewcommand{\geq}{\geqslant}
\renewcommand{\leq}{\leqslant}
\renewcommand{\P}{\text{P}}
\title{On the packing dimension of box-like self-affine sets in the plane}
\author{Jonathan M. Fraser\\ \\
\emph{Mathematical Institute, University of St. Andrews, North Haugh,}\\ \emph{St. Andrews, Fife, KY16 9SS, Scotland}\\ \emph{e-mail: jmf32@st-andrews.ac.uk}}
\begin{document}
\maketitle

\begin{abstract}
We consider a class of planar self-affine sets which we call ``box-like''.  A box-like self-affine set is the attractor of an iterated function system (IFS) of affine maps where the image of the unit square, $[0,1]^2$, under arbitrary compositions of the maps is a rectangle with sides parallel to the axes.  This class contains the Bedford-McMullen carpets and the generalisations thereof considered by Lalley-Gatzouras, Bara\'nski and Feng-Wang as well as many other sets.  In particular, we allow the mappings in the IFS to have non-trivial rotational and reflectional components.  Assuming a rectangular open set condition, we compute the packing and box-counting dimensions by means of a pressure type formula based on the singular values of the maps.
\\ \\
\emph{Mathematics Subject Classification} 2010:  primary: 28A80, secondary: 28A78, 15A18.
\\ \\
\emph{Key words and phrases}:  packing dimension, box dimension, self-affine, singular value function, subadditivity, projections.
\end{abstract}

\section{Introduction}

The \emph{singular values} of a linear map, $A:\mathbb{R}^n \to \mathbb{R}^n$, are the positive square roots of the eigenvalues of $A^T A$.  Geometrically these numbers represent the lengths of the semi-axes of the image of the unit ball under $A$.  Thus, roughly speaking, the singular values correspond to how much the affine map contracts (or expands) in different directions.  For $s \in [0,n]$ define the \emph{singular value function} $\phi^s(A)$ by
\begin{equation} \label{singvaluefunc}
\phi^s(A) = \alpha_1 \alpha_2 \dots \alpha_{\lceil s \rceil-1} \alpha_{\lceil s \rceil}^{s-\lceil s \rceil+1}
\end{equation}
where $\alpha_1 \geq  \dots \geq \alpha_n$ are the singular values of $A$.  This function has played a vital r\^ole in the study of self-affine sets over the past 25 years and in this paper we introduce a modified singular value function (\ref{modsing}) which is designed specifically to compute the packing dimension.  Given an iterated function system (IFS) consisting of contracting affine maps, $\{A_i+t_i\}_{i=1}^m$, where the $A_i$ are linear contractions and the $t_i$ are translation vectors, it is well-known that there exists a unique non-empty compact set $F$ satisfying
\[
F = \bigcup_{i=1}^m S_i(F)
\]
which is termed the \emph{self-affine} attractor of the IFS.  Let $\mathcal{I}^k$ denote the set of all sequences $(i_1, \dots, i_k)$, where each $i_j \in \{1, \dots, m\}$, and let
\begin{equation} \label{affinitydim}
d (A_1, \dots, A_m) = \inf \bigg\{ s: \sum_{k=1}^\infty \sum_{\mathcal{I}^k}  \phi^s ( A_{i_1} \circ \dots \circ A_{i_k} ) < \infty \bigg\}.
\end{equation}
This number is called the \emph{affinity dimension} of $F$ and is always an upper bound for the upper box dimension of $F$.  Moreover, Falconer proved the following result in the seminal paper \cite{affine}, published in 1988.  We write $\prod_{i=1}^{m} \mathcal{L}^n$ to denote the $m$-fold product of $n$-dimensional Lebesgue measure, supported on the space $\times_{i=1}^{m}\mathbb{R}^n$, and $\dim_\text{{B}}, \dim_\text{{P}}$ and $\dim_\text{{H}}$ to denote the box-counting, packing and Hausdorff dimensions, respectively.

\begin{thm}\label{falconersthm}
Let $A_1, \dots, A_m$ be contracting linear self-maps on $\mathbb{R}^n$ with Lipshitz constants strictly less than $1/2$.  Then, for $\big(\prod_{i=1}^{m} \mathcal{L}^n\big)$-almost all $(t_1, \dots, t_m) \in \times_{i=1}^{m}\mathbb{R}^n$, the unique non-empty compact set $F$ satisfying
\[
F = \bigcup_{i=1}^{m}\, (A_i+t_i)(F)
\]
has
\[
\dim_\text{\emph{B}} F = \dim_\text{\emph{P}} F = \dim_\text{\emph{H}} F = \min \big\{ n, d \, \big(A_1, \dots, A_m \big) \big\}.
\]
\end{thm}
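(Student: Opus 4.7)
My plan is to establish the two bounds separately: the upper bound $\overline{\dim}_{\text{B}} F \leq d$ holds deterministically for every choice of translation vectors, while the lower bound $\dim_{\text{H}} F \geq \min\{n, d\}$ is where the almost-sure statement enters. Together with the standard chain $\dim_{\text{H}} F \leq \dim_{\text{P}} F \leq \overline{\dim}_{\text{B}} F$ and the trivial fact that $\dim_{\text{H}} F \leq n$, these will pin all three dimensions to the claimed value $\min\{n, d\}$.

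For the deterministic upper bound, I would cover $F$ at level $k$ by the sets $(A_{i_1} + t_{i_1}) \circ \cdots \circ (A_{i_k} + t_{i_k})(Q)$ for $(i_1, \dots, i_k) \in \mathcal{I}^k$, where $Q$ is any fixed compact convex set containing $F$. Each such image lies inside an ellipsoid whose semi-axes are the singular values $\alpha_1 \geq \dots \geq \alpha_n$ of the linear composition $A_{i_1} \circ \cdots \circ A_{i_k}$. An elementary geometric argument shows that this ellipsoid can be covered by a number of balls of radius $\alpha_{\lceil s \rceil}$ proportional to $\alpha_1 \cdots \alpha_{\lceil s \rceil - 1}/\alpha_{\lceil s \rceil}^{\lceil s \rceil - 1}$, so the $s$-dimensional cover cost contributed by $\mathcal{I}^k$ is bounded by a constant multiple of $\sum_{\mathcal{I}^k} \phi^s(A_{i_1} \circ \cdots \circ A_{i_k})$, which by (\ref{affinitydim}) is summable in $k$ whenever $s > d$.

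For the lower bound I would use the standard potential-theoretic criterion: for each $s < \min\{n, d\}$ it suffices to produce a Borel probability measure $\mu$ supported on $F$ with finite $s$-energy $I_s(\mu) = \iint |x - y|^{-s} \, d\mu(x) \, d\mu(y)$. The natural candidate is the push-forward $\mu_{\mathbf{t}} = \pi_{\mathbf{t}*} \nu$ of a Bernoulli measure $\nu$ on $\{1, \dots, m\}^{\mathbb{N}}$ under the canonical projection $\pi_{\mathbf{t}}$ associated with the IFS for the translation vector $\mathbf{t} = (t_1, \dots, t_m)$. Both $\mu_{\mathbf{t}}$ and its $s$-energy depend measurably on $\mathbf{t}$, so by Fubini it is enough to show that the averaged energy $\int I_s(\mu_{\mathbf{t}}) \, d\big(\prod_{i=1}^m \mathcal{L}^n\big)(\mathbf{t})$ is finite when $\mathbf{t}$ is restricted to any fixed bounded cube.

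The decisive step, and in my view the main obstacle, is the singular-value integral estimate that makes the Fubini exchange useful: for every linear contraction $A$ on $\mathbb{R}^n$ with $\|A\| < 1/2$, every $s \in [0, n]$, every $x \in \mathbb{R}^n$, and every bounded $B$,
$$\int_B \frac{dt}{|Ax + t|^s} \leq \frac{C(s, n, B)}{\phi^s(A)} \, .$$
The strict bound $1/2$ on the Lipschitz constants is crucial here: it guarantees that when two symbolic sequences split at depth $k$, the vector $\pi_{\mathbf{t}}(i) - \pi_{\mathbf{t}}(j)$ can, after integrating out $\mathbf{t}$, be treated as $(A_{i_1} \circ \cdots \circ A_{i_k})(\cdot) + \mathrm{translation}$ without overlap contributions from deeper levels spoiling the estimate. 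Iterating the inequality along finite words and summing reduces the averaged energy to a constant multiple of $\sum_k \sum_{\mathcal{I}^k} \phi^s(A_{i_1} \circ \cdots \circ A_{i_k})$, which converges precisely for $s < d$ by (\ref{affinitydim}); this yields $\dim_{\text{H}} F \geq s$ almost surely for every $s < \min\{n, d\}$, closing the argument.
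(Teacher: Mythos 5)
The paper does not prove this statement---it is quoted as Theorem \ref{falconersthm} directly from Falconer's 1988 paper \cite{affine} (with the constant $1/2$ due to Solomyak \cite{solomyak})---so there is no in-paper proof to compare against; your outline is, however, a faithful reconstruction of the standard argument of those sources: the deterministic covering bound via $\phi^s$ for the upper bound, and the potential-theoretic lower bound resting on the integral estimate $\int_B |Ax+t|^{-s}\,dt \leq C\,\phi^s(A)^{-1}$ combined with Fubini. The sketch is correct as stated (modulo the usual care needed at integer values of $s$ and the precise role of the $1/2$ bound in making the change of variables in the energy integral non-degenerate, both of which you flag appropriately).
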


In fact, the initial proof required that the Lipshitz constants be strictly less than $1/3$ but this was relaxed to $1/2$ by Solomyak \cite{solomyak} who also showed that $1/2$ is the optimal constant.
\\ \\
Despite the elegance of the above result, it seems difficult to calculate the exact dimension of a self-affine set in general.  However, exact calculation of dimension in certain `exceptional cases' has attracted a great deal of interest in recent years.  The first example was the Bedford-McMullen carpet.  Take the unit square, $[0,1]^2$, and divide it up into an $m\times n$ grid for some $m,n \in \mathbb{N}$ with $1 < m \leq n$.  Then select a subset of the rectangles formed by the grid and consider the IFS consisting of the affine maps which map $[0,1]^2$ onto each chosen rectangle, preserving orientation.  Bedford \cite{bedford} and McMullen \cite{mcmullen} independently obtained explicit formulae for the box-counting, packing and Hausdorff dimensions of the attractor.  In general the Hausdorff dimension and box dimension can be different and can be strictly less than the affinity dimension.  However, if the maps are chosen such that the projection onto the horizontal axis is an interval (having dimension 1), then the box dimension equals the affinity dimension.  Our results help to formalise this observation for a much larger class of self-affine sets, see Corollaries \ref{coro2} and \ref{coro3}.

\begin{figure}[H]
	\centering
	\includegraphics[width=100mm]{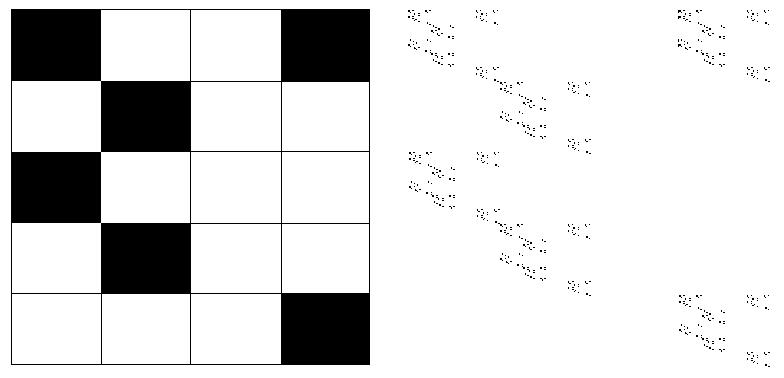}
\caption{A self-affine Bedford-McMullen carpet with $m=4$, $n=5$.  The shaded rectangles on the left indicate the 6 maps in the IFS.}
\end{figure}

Gatzouras and Lalley \cite{lalley-gatz} generalised the Bedford-McMullen construction by allowing the columns to have varying widths and be divided up, independently, with the only restriction being that the base of each rectangle had to be greater than or equal to the height.  Bara\'nski \cite{baranski} divided the unit square up into an arbitrary mesh of rectangles by slicing horizontally and vertically a finite number of times (at least once in each direction).  Also, Feng and Wang \cite{fengaffine} considered a construction where the rectangles did not have to be `aligned' as in the Bara\'nski type IFSs.  This added complication meant that the box dimension of the attractor was given in terms of the dimensions of its projection onto the horizontal and vertical axes.

\begin{figure}[H]
	\centering
	\includegraphics[width=150mm]{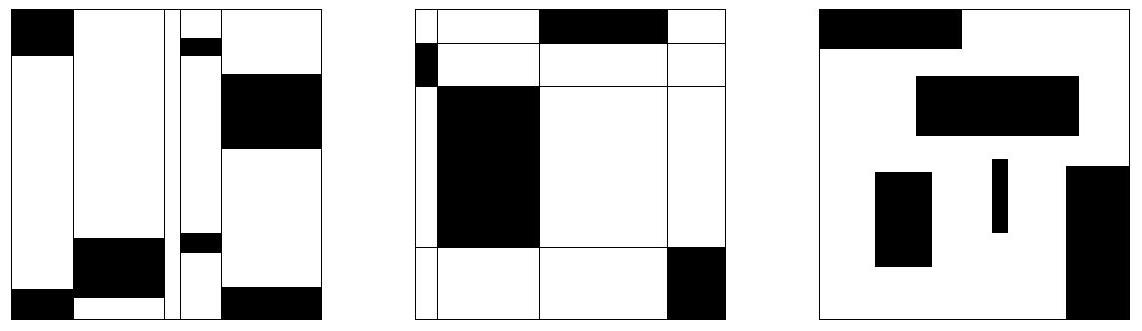}
\caption{Three examples of IFSs of the types considered by Gatzouras-Lalley, Bara\'nski and Feng-Wang, respectively.  The shaded rectangles represent the affine maps.}
\end{figure}

In all of the aforementioned examples the affine maps are orientation preserving.  In this paper we relax this requirement by allowing the maps to have non-trivial rotational and reflectional components.  We refer to the attractors of such systems as ``box-like'' sets and give their formal definition in Section \ref{boxdef}.  In Section \ref{results} we compute the packing and box-counting dimensions by means of a pressure type formula based on the singular values of the maps.  As in \cite{fengaffine} the dimension of projections will be significant.

\subsection{Box-like self-affine sets and notation} \label{boxdef}

We call a self-affine set \emph{box-like} if it is the attractor of an IFS of affine maps where the image of the unit square, $[0,1]^2$, under arbitrary compositions of the maps is a rectangle with sides parallel to the axes.  The affine maps which make up such an IFS are necessarily of the form $S = T\circ L + t$, where $T$ is a contracting linear map of the form
\[
T = \left ( \begin{array}{cc}
a & 0\\ 
0 &  b\\
\end{array} \right ) 
\]
for some $a,b \in (0,1)$; $L$ is a linear isometry of the plane for which $L([-1,1]^2) =[-1,1]^2$; and $t \in \mathbb{R}^2$ is a translation vector.  The following separation condition, which we will need to obtain the lower bound in our dimension result, was introduced in \cite{fengaffine}.

\begin{defn}
An IFS $\{S_i\}_{i=1}^m$ satisfies the rectangular open set condition (ROSC) if there exists a non-empty open rectangle, $R = (a,b)\times(c,d) \subset \mathbb{R}^2$, such that $\{S_i(R)\}_{i=1}^m$ are pairwise disjoint subsets of $R$.
\end{defn}
If, for all maps in the IFS, we let $L$ be the identity map and assume the \emph{rectangular open set condition}, then we obtain the class of self-affine sets considered by Feng and Wang \cite{fengaffine}.  
\\ \\
Let $\{S_i \}_{i \in \mathcal{I}}$ be an IFS consisting of maps of the form described above for some finite index set $\mathcal{I}$, with $\lvert \mathcal{I} \rvert \geq 2$, and let $F$ be the corresponding attractor, i.e., the unique non-empty compact set satisfying
\[
F = \bigcup_{i \in \mathcal{I}} S_{i}(F).
\]
We refer to $F$ as the \emph{box-like self-affine set}.  It is clear that we may choose a compact square $\mathcal{Q}\subset \mathbb{R}^2$ such that $\bigcup_{i \in \mathcal{I}} S_{i}(\mathcal{Q}) \subseteq \mathcal{Q}$.  Without loss of generality we will assume throughout that we may choose $\mathcal{Q} = [0,1]^2$.  Let
\[
\mathcal{I}_A = \{ i \in \mathcal{I} : S_i \text{ maps horizontal lines to horizontal lines} \}
\]
and
\[
\mathcal{I}_B = \{ i \in \mathcal{I} : S_i \text{ maps horizontal lines to vertical lines} \}.
\]
If $\mathcal{I}_B=\emptyset$, then we will say $F$ is of \emph{separated type} and otherwise we will say that $F$ is of \emph{non-separated type}.  It will become clear why we make this distinction in the following section.  
\\ \\
Write $\mathcal{I}^* = \bigcup_{k\geq1} \mathcal{I}^k$ to denote the set of all finite sequences with entries in $\mathcal{I}$ and for
\[
\textbf{\emph{i}}= \big(i_1, i_2, \dots, i_k \big) \in \mathcal{I}^*
\]
write
\[
S_{\textbf{\emph{i}}} = S_{i_1} \circ S_{i_2} \circ \dots \circ S_{i_k}
\]
and $\alpha_1 (\textbf{\emph{i}}) \geq \alpha_2 (\textbf{\emph{i}})$ for the singular values of the linear part of the map $S_{\textbf{\emph{i}}}$.  Note that, for all $\textbf{\emph{i}} \in \mathcal{I}^*$, the singular values, $\alpha_1 (\textbf{\emph{i}})$ and $\alpha_2 (\textbf{\emph{i}})$, are just the lengths of the sides of the rectangle $S_{\textbf{\emph{i}}}\big([0,1]^2\big)$.  Finally, let
\[
\alpha_{\min} = \min \{\alpha_2(i) : i \in \mathcal{I} \}
\]
and
\[
\alpha_{\max} = \max \{\alpha_1(i) : i \in \mathcal{I} \}.
\]
Recall that the lower and upper box-counting dimensions of a bounded set $F \subset \mathbb{R}^d$ are defined by
\begin{equation*} \label{lowerbox}
\underline{\dim}_\text{B} F = \liminf_{\delta \to 0} \, \frac{\log N_\delta (F)}{-\log \delta}
\end{equation*}
and
\begin{equation*} \label{upperbox}
\overline{\dim}_\text{B} F = \limsup_{\delta \to 0} \,  \frac{\log N_\delta (F)}{-\log \delta}
\end{equation*}
respectively, where $N_\delta (F)$ is the smallest number of sets required for a $\delta$-cover of $F$, or, alternatively, the number of closed squares in a $\delta$-mesh which intersect $F$.  If $\underline{\dim}_\text{B} F = \overline{\dim}_\text{B} F$, then we call the common value the box-counting dimension of $F$ and denote it by $\dim_\text{B} F$.  Although we will always refer to $\dim_\text{B}$ as the box-counting dimension, or just box dimension, it is also commonly referred to by other names, for example, the entropy dimension or Minkowski dimension.  Our results also concern packing dimension, $\dim_\P$, however, we will not use its definition directly and so we omit it.  For the definitions of packing measure and dimension, as well as a discussion of various properties of box dimension and the interplay between packing and box dimension, the reader is referred to \cite{falconer}.

\section{Results} \label{results}

In this section we will state our main results.  The dimension formula, which relies on the knowledge of the dimensions of the projection of $F$ onto the horizontal and vertical axes, will be given in Section \ref{dimformula}.  In Section \ref{projections} we will discuss the problem of calculating the dimensions of the relevant projections.

\subsection{The dimension formula} \label{dimformula}

Let $\pi_1, \pi_2: \mathbb{R}^2 \to \mathbb{R}$ be defined by $\pi_1(x,y) = x$ and $\pi_2(x,y) = y$ respectively.  Also, let
\[
s_1 = \dim_\text{B} \pi_1(F)
\]
and
\[
s_2 = \dim_\text{B} \pi_2(F).
\]
It can be shown that both $\dim_\text{B} \pi_1(F)$ and $\dim_\text{B} \pi_2(F)$ exist using the `implicit theorems' found in \cite{implicit, mclaughlin}, or, alternatively, see Lemma \ref{proj2} in Section \ref{projections}.  For $\textbf{\emph{i}} \in \mathcal{I}^*$, let $b(\textbf{\emph{i}}) = \lvert \pi_1(S_{\textbf{\emph{i}}}[0,1]^2)\rvert$ and $h(\textbf{\emph{i}}) = \lvert \pi_2(S_{\textbf{\emph{i}}}[0,1]^2)\rvert$ denote the length of the base and height of the rectangle $S_{\textbf{\emph{i}}}[0,1]^2$ respectively and define $\pi_{\textbf{\emph{i}}}:\mathbb{R}^2 \to \mathbb{R}$ by
\[
\pi_{\textbf{\emph{i}}} = \left\{ \begin{array}{cc}
\pi_1 &  \text{if $\textbf{\emph{i}} \in \mathcal{I}_A$ and $b(\textbf{\emph{i}}) \geq h(\textbf{\emph{i}})$}\\ 
\pi_2 &   \text{if $\textbf{\emph{i}} \in \mathcal{I}_A$ and $b(\textbf{\emph{i}}) < h(\textbf{\emph{i}})$}\\
\pi_1 &   \text{if $\textbf{\emph{i}} \in \mathcal{I}_B$ and $b(\textbf{\emph{i}}) < h(\textbf{\emph{i}})$}\\ 
\pi_2 &   \text{if $\textbf{\emph{i}} \in \mathcal{I}_B$ and $b(\textbf{\emph{i}}) \geq h(\textbf{\emph{i}})$}
\end{array} \right. 
\]
Finally, let $s(\textbf{\emph{i}}) = \dim_\text{B} \pi_{\textbf{\emph{i}}}F$.  In fact, $s(\textbf{\emph{i}})$ is simply the box dimension of the projection of $S_\textbf{\emph{i}}(F)$ onto the longest side of the rectangle $S_\textbf{\emph{i}}\big([0,1]^2\big)$ and is always equal to either $s_1$ or $s_2$.
\\ \\
For $s\geq 0$ and $\textbf{\emph{i}} \in \mathcal{I}^*$, we define the \emph{modified singular value function}, $\psi^s$, of $S_{\textbf{\emph{i}}}$ by

\begin{equation} \label{modsing}
\psi^s\big(S_{\textbf{\emph{i}}}\big) = \alpha_1 (\textbf{\emph{i}})^{ s(\textbf{\emph{i}})} \, \,  \alpha_2 (\textbf{\emph{i}})^ {s-s(\textbf{\emph{i}})},
\end{equation}

and for $s\geq 0$ and $k \in \mathbb{N}$, we define a number $\Psi_k^s$ by
\[
\Psi_k^s= \sum_{\textbf{\emph{i}} \in \mathcal{I}^{k}} \psi^s(S_{\textbf{\emph{i}}}) .
\]

\begin{lma}[multiplicative properties] \hspace{1mm} \label{additive}
\\ \\
a) For $s \geq 0$ and $\textbf{{i}}, \textbf{{j}} \in \mathcal{I}^*$ we have

\begin{itemize}
\item[a1)]  If $s< s_1+s_2$, then $\psi^s(S_{\textbf{{i}}} \circ S_{\textbf{{j}}}) \leq  \psi^s(S_{\textbf{{i}}})  \, \psi^s(S_{\textbf{{j}}})$;

\item[a2)]  If $s= s_1+s_2$, then $\psi^s(S_{\textbf{{i}}} \circ S_{\textbf{{j}}}) =  \psi^s(S_{\textbf{{i}}})  \, \psi^s(S_{\textbf{{j}}})$;

\item[a3)]  If $s> s_1+s_2$, then $\psi^s(S_{\textbf{{i}}} \circ S_{\textbf{{j}}}) \geq  \psi^s(S_{\textbf{{i}}})  \, \psi^s(S_{\textbf{{j}}})$.

\end{itemize}

b) For $s \geq 0$ and $k,l \in \mathbb{N}$ we have

\begin{itemize}
\item[b1)]  If $s< s_1+s_2$, then $\Psi_{k+l}^s \leq \Psi_{k}^s \, \Psi_{l}^s$;

\item[b2)]  If $s= s_1+s_2$, then $\Psi_{k+l}^s = \Psi_{k}^s \, \Psi_{l}^s$;

\item[b3)]  If $s> s_1+s_2$, then $\Psi_{k+l}^s \geq \Psi_{k}^s \, \Psi_{l}^s$.

\end{itemize}

\end{lma}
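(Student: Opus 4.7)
My approach is to reduce (b) to (a) by summing each pointwise comparison in (a) over $(\textbf{\emph{i}}, \textbf{\emph{j}}) \in \mathcal{I}^k \times \mathcal{I}^l$, noting that concatenation is a bijection with $\mathcal{I}^{k+l}$. The core is therefore (a), which I would base on two elementary multiplicative facts. First, the determinant is exactly multiplicative,
\[
\alpha_1(\textbf{\emph{ij}})\,\alpha_2(\textbf{\emph{ij}}) \;=\; \alpha_1(\textbf{\emph{i}})\,\alpha_2(\textbf{\emph{i}}) \cdot \alpha_1(\textbf{\emph{j}})\,\alpha_2(\textbf{\emph{j}}),
\]
since this product equals $|\det|$ of the linear part of $S_\textbf{\emph{ij}}$. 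Secondly, $\alpha_1$ is submultiplicative, $\alpha_1(\textbf{\emph{ij}}) \leq \alpha_1(\textbf{\emph{i}})\,\alpha_1(\textbf{\emph{j}})$: a direct check over the four type-combinations $AA$, $AB$, $BA$, $BB$ shows that $\max(b(\textbf{\emph{ij}}), h(\textbf{\emph{ij}}))$ is always one of the four products $b_i b_j$, $b_i h_j$, $h_i b_j$, $h_i h_j$, each bounded above by $\max(b_i, h_i)\max(b_j, h_j)$.

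For (a) itself I split on whether $F$ is of separated or non-separated type. In the \textbf{separated case} ($\mathcal{I}_B = \emptyset$) every word is of type $A$, so $b$ and $h$ each multiply \emph{exactly} along the IFS. I would introduce the auxiliary functions
\[
\tilde\psi_1(\textbf{\emph{k}}) = b(\textbf{\emph{k}})^{s_1}\, h(\textbf{\emph{k}})^{s-s_1}, \qquad \tilde\psi_2(\textbf{\emph{k}}) = b(\textbf{\emph{k}})^{s-s_2}\, h(\textbf{\emph{k}})^{s_2},
\]
each therefore exactly multiplicative under concatenation. A short calculation of the ratio $\tilde\psi_1/\tilde\psi_2 = (b/h)^{s_1+s_2-s}$, matched against the definition of $\pi_{\textbf{\emph{k}}}$, shows that $\psi^s(\textbf{\emph{k}}) = \max\{\tilde\psi_1(\textbf{\emph{k}}), \tilde\psi_2(\textbf{\emph{k}})\}$ when $s < s_1+s_2$, that $\tilde\psi_1 = \tilde\psi_2 = \psi^s$ when $s = s_1+s_2$, and that $\psi^s(\textbf{\emph{k}}) = \min\{\tilde\psi_1(\textbf{\emph{k}}), \tilde\psi_2(\textbf{\emph{k}})\}$ when $s > s_1+s_2$. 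Parts (a1)--(a3) then follow from the elementary bounds $\max(ab, cd) \leq \max(a,c)\max(b,d)$ and $\min(ab, cd) \geq \min(a,c)\min(b,d)$ for nonnegative reals, applied to $\tilde\psi_1, \tilde\psi_2$ evaluated at $\textbf{\emph{i}}, \textbf{\emph{j}}, \textbf{\emph{ij}}$.

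In the \textbf{non-separated case} ($\mathcal{I}_B \neq \emptyset$) I would first derive that $s_1 = s_2$. Indeed, $\pi_1(F)$ is the union of affine copies of $\pi_1(F)$ (from type-$A$ maps) and affine copies of $\pi_2(F)$ (from type-$B$ maps), so finite stability of upper box dimension forces $s_1 \geq s_2$; the mirror decomposition of $\pi_2(F)$ gives $s_2 \geq s_1$, hence $s_1 = s_2 =: s_*$. Then $s(\textbf{\emph{k}}) = s_*$ for every $\textbf{\emph{k}}$, and
\[
\psi^s(\textbf{\emph{k}}) \;=\; \alpha_1(\textbf{\emph{k}})^{s_*}\, \alpha_2(\textbf{\emph{k}})^{s-s_*} \;=\; \bigl(\alpha_1 \alpha_2\bigr)(\textbf{\emph{k}})^{s-s_*} \cdot \alpha_1(\textbf{\emph{k}})^{2s_* - s}.
\]
Applying the two underlying facts above to the two factors immediately delivers (a1)--(a3), the direction of the inequality being governed by the sign of the exponent $2s_* - s = s_1 + s_2 - s$.

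The main obstacle is the identity $s_1 = s_2$ in the non-separated case: without it, $s(\textbf{\emph{k}})$ genuinely depends on the orientation type of $\textbf{\emph{k}}$ and pointwise multiplicativity at the critical exponent $s = s_1 + s_2$ does fail for individual $AB$-composed pairs. Deducing $s_1 = s_2$ is however essentially the graph-directed argument sketched above (or a direct application of Lemma \ref{proj2}), after which the rest of the proof is routine bookkeeping.
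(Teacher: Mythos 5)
Your proposal is correct, and its overall skeleton coincides with the paper's: reduce (b) to (a) by summing over concatenations, split (a) into the separated and non-separated cases, use exact multiplicativity of the determinant $\alpha_1\alpha_2$ together with submultiplicativity of $\alpha_1$ and the sign of $s_1+s_2-s$ in the non-separated case (where $s_1=s_2$ comes from Lemma \ref{proj2}), and use exact multiplicativity of the base and height in the separated case. The one place you genuinely diverge is the organisation of the separated case: the paper fixes the orientation of $\textbf{\emph{i}}$ without loss of generality and checks the ratio $\psi^s(S_{\textbf{\emph{i}}}\circ S_{\textbf{\emph{j}}})/\big(\psi^s(S_{\textbf{\emph{i}}})\psi^s(S_{\textbf{\emph{j}}})\big)$ in three explicit sub-cases, writing out only (a1) and declaring (a2), (a3) analogous; you instead exhibit $\psi^s$ as the pointwise $\max$ (resp.\ common value, resp.\ $\min$) of the two exactly multiplicative functions $b^{s_1}h^{s-s_1}$ and $b^{s-s_2}h^{s_2}$ according to the sign of $s_1+s_2-s$, and invoke $\max(ab,cd)\leq\max(a,c)\max(b,d)$ and its $\min$ counterpart. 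This buys a uniform treatment of all three inequalities at once and eliminates the case analysis, at the cost of a small verification that the $\max/\min$ identification matches the definition of $\pi_{\textbf{\emph{k}}}$; both arguments rest on the same underlying facts, and your observation that the identity $s_1=s_2$ is the essential input in the non-separated case is exactly the role Lemma \ref{proj2} plays in the paper.
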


We will prove Lemma \ref{additive} in Section \ref{add}.  It follows from Lemma \ref{additive} and standard properties of sub- and super-multiplicative sequences that that we may define a function $P:[0, \infty) \to [0, \infty)$ by:
\[
P(s) = \lim_{k \to \infty} (\Psi_k^s) ^{1/k} 
\]
where, in fact,
\[
\lim_{k \to \infty} (\Psi_k^s) ^{1/k} = \left\{ \begin{array}{cc}
\inf_{k \in \mathbb{N}} \,  (\Psi_k^s) ^{1/k}&  \text{if $s \in [0,s_1+s_2)$}\\ \\
\Psi_1^s &  \text{if $s  = s_1+s_2$}\\ \\
\sup_{k \in \mathbb{N}} \,  (\Psi_k^s) ^{1/k} &  \text{if $s \in (s_1+s_2, \infty)$}
\end{array} \right. 
\]
We think of $P$ as being akin to a Bowen-like pressure function.  Although $P$ is not a `pressure' function in the usual sense of the word, it is the exponential of the subadditive function
\[
P^*(s)  = \lim_{k \to \infty} \tfrac{1}{k} \log \Psi_k^s 
\]
which one might call the \emph{topological pressure} of the system.

\begin{lma}[Properties of $P$] \label{P} \hspace{1mm}
\begin{itemize}
\item[(1)]  For all $s,t \geq 0$ we have
\[
\alpha_{\min}^s P(t) \, \leq\,  P(s+t)\,  \leq \, \alpha_{\max}^s P(t)
\]
and furthermore, setting $t=0$, we have, for all $s \geq 0$ 
\[
0\, <\, \alpha_{\min}^s P(0) \, \leq\,  P(s)\,  \leq \, \alpha_{\max}^s P(0) \, < \, \infty,
\]
where $P(0) \in [\lvert \mathcal{I} \rvert,\infty)$ is a constant;
\item[(2)] $P$ is continuous on $[0, \infty)$;
\item[(3)] $P$ is strictly decreasing on $[0,\infty)$;
\item[(4)] There is a unique value $s\geq 0$ for which $P(s)=1$.
\end{itemize}
\end{lma}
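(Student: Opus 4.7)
\textbf{Proof proposal for Lemma \ref{P}.}

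The plan is to derive everything from one identity: because the exponent $s(\textbf{\emph{i}})$ on $\alpha_1(\textbf{\emph{i}})$ in (\ref{modsing}) does not depend on $s$,
\[
\psi^{s+t}(S_{\textbf{\emph{i}}}) \;=\; \alpha_2(\textbf{\emph{i}})^{s}\,\psi^{t}(S_{\textbf{\emph{i}}}) \qquad (s,t \ge 0,\ \textbf{\emph{i}} \in \mathcal{I}^*).
\]
All four statements then reduce to controlling $\alpha_2(\textbf{\emph{i}})$ uniformly over $\textbf{\emph{i}} \in \mathcal{I}^k$.

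The first step is to prove the two-sided bound $\alpha_{\min}^k \le \alpha_2(\textbf{\emph{i}}) \le \alpha_{\max}^k$ for every $\textbf{\emph{i}} \in \mathcal{I}^k$. The upper estimate is submultiplicativity of the operator norm: $\alpha_2(\textbf{\emph{i}}) \le \alpha_1(\textbf{\emph{i}}) \le \prod_j \alpha_1(i_j) \le \alpha_{\max}^k$. For the lower estimate I would exploit that each isometry $L_{i_j}$ has unit determinant, so the determinant of the linear part of $S_{\textbf{\emph{i}}}$ equals $\prod_j a_{i_j}b_{i_j}$; hence $\alpha_1(\textbf{\emph{i}})\alpha_2(\textbf{\emph{i}}) = \prod_j \alpha_1(i_j)\alpha_2(i_j)$, and dividing by the upper bound on $\alpha_1(\textbf{\emph{i}})$ gives $\alpha_2(\textbf{\emph{i}}) \ge \prod_j \alpha_2(i_j) \ge \alpha_{\min}^k$. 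Plugging these bounds into the displayed identity and summing over $\textbf{\emph{i}} \in \mathcal{I}^k$ yields
\[
\alpha_{\min}^{ks}\,\Psi_k^t \;\le\; \Psi_k^{s+t} \;\le\; \alpha_{\max}^{ks}\,\Psi_k^t,
\]
and taking $k$-th roots and passing to the limit defining $P$ produces the main inequality of (1). Setting $t=0$ and noting that $\psi^0(S_{\textbf{\emph{i}}}) = (\alpha_1(\textbf{\emph{i}})/\alpha_2(\textbf{\emph{i}}))^{s(\textbf{\emph{i}})} \in [\,1,\,(\alpha_{\max}/\alpha_{\min})^k\,]$ gives $\lvert\mathcal{I}\rvert^k \le \Psi_k^0 < \infty$, so $\lvert\mathcal{I}\rvert \le P(0) < \infty$.

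Parts (2)--(4) are then short. For continuity, applying (1) to both $P(s+h)$ and to $P(s) = P((s-h)+h)$ sandwiches each of $P(s \pm h)/P(s)$ between $\alpha_{\min}^{|h|}$ and $\alpha_{\max}^{-|h|}$, both of which tend to $1$ as $h \to 0$. For strict monotonicity (3), (1) together with $\alpha_{\max} < 1$ and $P(t)>0$ gives $P(s+t) \le \alpha_{\max}^s P(t) < P(t)$ for every $s>0$. Finally (4) is the intermediate value theorem: $P(0) \ge \lvert\mathcal{I}\rvert \ge 2 > 1$ while $P(s) \le \alpha_{\max}^s P(0) \to 0$ as $s \to \infty$, so continuity together with strict monotonicity forces a unique zero of $P(\cdot)-1$.

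I do not anticipate any serious obstacle. The only step requiring any care is the two-sided bound on $\alpha_2(\textbf{\emph{i}})$, but in $\mathbb{R}^2$ the relation $\alpha_1\alpha_2 = \lvert\det\rvert$ combined with the submultiplicativity of $\alpha_1$ makes this routine in the box-like setting, where the isometry factors contribute nothing to the determinant.
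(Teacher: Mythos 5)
Your proposal is correct and follows essentially the same route as the paper: part (1) is obtained by bounding $\alpha_2(\textbf{\emph{i}})^{s}$ between $\alpha_{\min}^{ks}$ and $\alpha_{\max}^{ks}$ inside $\Psi_k^{s+t}$, and (2)--(4) are then read off from (1) exactly as in the paper. The only difference is that you justify the lower bound $\alpha_2(\textbf{\emph{i}}) \geq \alpha_{\min}^{k}$ explicitly via the determinant identity $\alpha_1\alpha_2 = \lvert\det\rvert$, a step the paper leaves implicit (in the box-like setting it is immediate since the singular values of $S_{\textbf{\emph{i}}}$ are products of the individual diagonal entries).
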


We will prove Lemma \ref{P} in Section \ref{prelim}.  We can now state our main result concerning the packing and box-counting dimensions for box-like self-affine sets.

\begin{thm} \label{main}
Let $F$ be a box-like self-affine set.  Then $\dim_\text{\emph{P}} F = \overline{\dim}_\text{\emph{B}} F \leq s$ where $s \geq 0$ is the unique solution of $P(s) = 1$.  Furthermore, if the ROSC is satisfied, then $\dim_\text{\emph{P}} F = \dim_\text{\emph{B}} F= s$.
\end{thm}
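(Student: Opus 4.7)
The plan has three tasks: first prove $\overline{\dim}_\text{B} F \leq s$, then the identity $\dim_\text{P} F = \overline{\dim}_\text{B} F$, and finally (under the ROSC) the matching lower bound $\underline{\dim}_\text{B} F \geq s$. The first two require no separation hypothesis and together give the first sentence of the theorem; the third forces the box dimension to exist and equal $s$.

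For the upper bound I would use a stopping-time covering. For $\delta \in (0,1)$ define
$$\Delta_\delta = \{\textbf{\emph{i}} = (i_1,\dots,i_k) \in \mathcal{I}^* : \alpha_2(\textbf{\emph{i}}) \leq \delta < \alpha_2(i_1,\dots,i_{k-1})\},$$
a finite antichain with $F = \bigcup_{\textbf{\emph{i}} \in \Delta_\delta} S_{\textbf{\emph{i}}}(F)$. Each $S_{\textbf{\emph{i}}}(F)$ sits in a rectangle of dimensions $\alpha_1(\textbf{\emph{i}}) \times \alpha_2(\textbf{\emph{i}})$ with short side at most $\delta$; projecting onto the long side produces an affine copy of $\pi_{\textbf{\emph{i}}} F$, so for any $\epsilon > 0$ one gets $N_\delta(S_{\textbf{\emph{i}}}(F)) \leq C_\epsilon\, (\alpha_1(\textbf{\emph{i}})/\delta)^{s(\textbf{\emph{i}}) + \epsilon}$. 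I first verify $s \geq \max\{s_1,s_2\} \geq s(\textbf{\emph{i}})$ by observing $\Psi_1^{\max\{s_1,s_2\}} \geq 1$ and using the strict monotonicity of $P$ from Lemma \ref{P}. Combined with $\alpha_2(\textbf{\emph{i}}) \leq \delta$, this gives $(\alpha_1(\textbf{\emph{i}})/\delta)^{s(\textbf{\emph{i}})} \leq \delta^{-s}\, \psi^s(S_{\textbf{\emph{i}}})$, so summing produces
$$N_\delta(F) \leq C\, \delta^{-s-\epsilon'} \sum_{\textbf{\emph{i}} \in \Delta_\delta} \psi^s(S_{\textbf{\emph{i}}}).$$
The antichain sum is controlled by extending each word in $\Delta_\delta$ to a uniform length $k$, invoking the relevant direction of Lemma \ref{additive}, and taking $k$ large enough that $(\Psi_k^s)^{1/k}$ lies within $\epsilon$ of $P(s)=1$. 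Letting $\delta \to 0$ and then $\epsilon \to 0$ yields $\overline{\dim}_\text{B} F \leq s$, and $\dim_\text{P} F \leq \overline{\dim}_\text{B} F$ is automatic for bounded sets. The identity $\dim_\text{P} F = \overline{\dim}_\text{B} F$ then follows from local homogeneity: for every $x \in F$ and $r > 0$ some $S_{\textbf{\emph{i}}}(F) \subseteq B(x,r) \cap F$ has full upper box dimension, so the characterisation $\dim_\text{P} F = \inf\{\sup_i \overline{\dim}_\text{B} F_i : F \subseteq \bigcup_i F_i\}$ forces $\dim_\text{P} F \geq \overline{\dim}_\text{B} F$.

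Under the ROSC the lower bound is dual. For each $\textbf{\emph{i}} \in \Delta_\delta$ the projection of $S_{\textbf{\emph{i}}}(F)$ onto the long side is $\pi_{\textbf{\emph{i}}} F$ scaled by $\alpha_1(\textbf{\emph{i}})$, so
$$N_\delta(S_{\textbf{\emph{i}}}(F)) \geq N_{\delta/\alpha_1(\textbf{\emph{i}})}(\pi_{\textbf{\emph{i}}} F) \geq c_\epsilon\, (\alpha_1(\textbf{\emph{i}})/\delta)^{s(\textbf{\emph{i}}) - \epsilon}.$$
The ROSC disjointness of $\{S_{\textbf{\emph{i}}}(R)\}_{\textbf{\emph{i}} \in \Delta_\delta}$, together with the fact that all these rectangles have short side comparable to $\delta$, forces any fixed $\delta$-mesh square to meet only a bounded number of them; summing the individual counts yields
$$N_\delta(F) \gtrsim \delta^{-(s-2\epsilon)} \sum_{\textbf{\emph{i}} \in \Delta_\delta} \psi^{s-2\epsilon}(S_{\textbf{\emph{i}}}),$$
and this last sum stays bounded below by a positive constant because $P(s-2\epsilon) > 1$. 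Sending $\epsilon \to 0$ gives $\underline{\dim}_\text{B} F \geq s$.

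The main obstacle throughout is the antichain sum $\sum_{\textbf{\emph{i}} \in \Delta_\delta} \psi^t(S_{\textbf{\emph{i}}})$ in both directions, because the ratios $\alpha_1(\textbf{\emph{i}})/\alpha_2(\textbf{\emph{i}})$ and the projection exponent $s(\textbf{\emph{i}}) \in \{s_1,s_2\}$ vary along the antichain, so the sum cannot be identified with a single $\Psi_k^t$. The remedy is to extend every $\textbf{\emph{i}} \in \Delta_\delta$ to a common long length $k$ by concatenation, use Lemma \ref{additive} to compare the extended sum to $\Psi_k^t$, and then exploit $(\Psi_k^t)^{1/k} \to P(t)$. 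A secondary, purely geometric, difficulty for the lower bound is ensuring uniform $\delta$-separation of $\{S_{\textbf{\emph{i}}}(R)\}$ across the antichain so that the individual counts add up with bounded overlap; this is exactly where the ROSC enters.
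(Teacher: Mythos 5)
Your skeleton matches the paper's: a stopping-time antichain $\mathcal{I}_\delta$ defined by $\alpha_2$, reduction of $N_\delta(S_{\textbf{\emph{i}}}F)$ to a covering count of the projection $\pi_{\textbf{\emph{i}}}F$ at scale $\delta/\alpha_1(\textbf{\emph{i}})$, the bi-Lipschitz-copies argument for $\dim_\text{P} F = \overline{\dim}_\text{B} F$, and a bounded-overlap count from the ROSC for the lower bound. The problem is the step you yourself flag as the main obstacle: the uniform two-sided control of $\sum_{\textbf{\emph{i}} \in \Delta_\delta}\psi^t(S_{\textbf{\emph{i}}})$. Your remedy --- pad every word of $\Delta_\delta$ to a common length $k$ and compare to $\Psi_k^t$ --- does not go through. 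For the lower bound (the critical case, $t = s - 2\epsilon < s \leq s_1+s_2$, so $\psi^t$ is \emph{sub}multiplicative), decomposing $\mathcal{I}^{k}$ over prefixes in $\Delta_\delta$ gives only $\Psi^t_{k} \leq \sum_{\textbf{\emph{i}} \in \Delta_\delta}\psi^t(S_{\textbf{\emph{i}}})\,\Psi^t_{k-|\textbf{\emph{i}}|}$, and since a submultiplicative sequence satisfies $\Psi_l^t \geq P(t)^l$ but admits no matching upper bound $\Psi_l^t \leq C\,P(t)^l$, this yields a lower bound for the antichain sum that degenerates to $0$ as $\delta \to 0$. The paper's Lemma \ref{deltaconv}(2) avoids this by Falconer's contradiction argument: assume $\sum_{\mathcal{I}_\delta}\psi^t \leq 1$, iterate the stopping itself (concatenations of antichain words) to keep the sum $\leq 1$, write every word of length $k$ as such a concatenation times a tail of length at most $k(\delta)$, deduce $\Psi_k^t \leq \lvert\mathcal{I}\rvert^{k(\delta)+1}c_{k(\delta)}$ for all $k$, hence $P(t)\leq 1$ and $t \geq s$. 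That iteration over the antichain, rather than padding to a fixed level, is the missing idea.

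Two smaller points. First, in the upper bound you bound the antichain sum at the critical exponent $s$ itself, where $P(s)=1$ only gives $\Psi_k^s = e^{o(k)}$ and your padding device again fails (it would need \emph{super}multiplicativity); the clean fix, which is what the paper does, is to work at exponent $s+\varepsilon/2$, where $\sum_{\mathcal{I}_\delta}\psi^{s+\varepsilon/2} \leq \sum_k \Psi_k^{s+\varepsilon/2} < \infty$ because $P(s+\varepsilon/2)<1$. Second, your inequality $(\alpha_1(\textbf{\emph{i}})/\delta)^{s(\textbf{\emph{i}})} \leq \delta^{-s}\psi^s(S_{\textbf{\emph{i}}})$ is equivalent to $(\delta/\alpha_2(\textbf{\emph{i}}))^{s-s(\textbf{\emph{i}})}\leq 1$, which is \emph{false} when $\alpha_2(\textbf{\emph{i}})\leq\delta$ and $s \geq s(\textbf{\emph{i}})$; you need the other half of the stopping estimate, $\alpha_2(\textbf{\emph{i}}) \geq \alpha_{\min}\delta$, which rescues the inequality up to the harmless constant $\alpha_{\min}^{-s}$ (and makes your excursion into proving $s \geq \max\{s_1,s_2\}$ --- whose justification via $\Psi_1^{\max\{s_1,s_2\}}\geq 1$ is itself inconclusive, as $P = \inf_k(\Psi_k)^{1/k}\leq\Psi_1$ in the submultiplicative range --- unnecessary).
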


We will prove Theorem \ref{main} in Section \ref{mainproof}.  We will now give two corollaries of Theorem \ref{main} which show that the dimension formula can be simplified in certain situations.  The first of which deals with the case where $s_1 = s_2$.  This will occur, for example, if $F$ is of non-separated type (see Lemma \ref{proj2}).

\begin{cor}
Let $F$ be a box-like self-affine set which satisfies the ROSC and is such that $s_1 = s_2 = :t$.  Then $\dim_\text{\emph{P}} F = \dim_\text{\emph{B}} F= s$, where $s$ satisfies
\[
\lim_{k \to \infty} \bigg(\sum_{\textbf{{i}} \in \mathcal{I}^{k}}  \alpha_1 (\textbf{{i}})^{t} \, \,  \alpha_2 (\textbf{{i}})^ {s-t}\bigg)^{1/k} = 1.
\]
\end{cor}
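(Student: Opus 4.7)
The plan is to observe that the corollary is essentially a direct substitution into Theorem \ref{main}, with the simplification coming from collapsing the case distinctions in the definition of $\pi_{\textbf{\emph{i}}}$ (and hence of $s(\textbf{\emph{i}})$).

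First I would unpack the definition of $s(\textbf{\emph{i}}) = \dim_\text{B} \pi_{\textbf{\emph{i}}}(F)$. By construction $\pi_{\textbf{\emph{i}}}$ is always either $\pi_1$ or $\pi_2$, so $s(\textbf{\emph{i}}) \in \{s_1, s_2\}$ for every $\textbf{\emph{i}} \in \mathcal{I}^*$. Under the hypothesis $s_1 = s_2 = t$, this four-way case distinction becomes irrelevant: regardless of whether $\textbf{\emph{i}} \in \mathcal{I}_A$ or $\mathcal{I}_B$, and regardless of whether $b(\textbf{\emph{i}}) \geq h(\textbf{\emph{i}})$ or $b(\textbf{\emph{i}}) < h(\textbf{\emph{i}})$, we have $s(\textbf{\emph{i}}) = t$. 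Substituting into the definition (\ref{modsing}) of the modified singular value function, I obtain
\[
\psi^s(S_{\textbf{\emph{i}}}) = \alpha_1(\textbf{\emph{i}})^{t} \, \alpha_2(\textbf{\emph{i}})^{s-t}
\]
uniformly in $\textbf{\emph{i}} \in \mathcal{I}^*$, and consequently
\[
\Psi_k^s = \sum_{\textbf{\emph{i}} \in \mathcal{I}^{k}} \alpha_1(\textbf{\emph{i}})^{t} \, \alpha_2(\textbf{\emph{i}})^{s-t}.
\]

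The remainder is a direct appeal to Theorem \ref{main}: under the ROSC, $\dim_\text{P} F = \dim_\text{B} F = s$, where $s$ is the unique value with $P(s) = \lim_{k \to \infty} (\Psi_k^s)^{1/k} = 1$ (uniqueness being guaranteed by Lemma \ref{P}(4)). Plugging in the simplified expression for $\Psi_k^s$ above yields exactly the equation in the statement of the corollary.

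There is no real obstacle here; the only point to double-check is that the hypothesis $s_1 = s_2 = t$ really does force $s(\textbf{\emph{i}}) = t$ across all four cases of $\pi_{\textbf{\emph{i}}}$, which is immediate from $\dim_\text{B} \pi_1(F) = \dim_\text{B} \pi_2(F) = t$. The corollary is therefore best viewed as a cosmetic restatement of Theorem \ref{main} in the setting where the two axis projections happen to share the same box dimension — a setting which, as the author remarks, automatically holds in the non-separated case by Lemma \ref{proj2}.
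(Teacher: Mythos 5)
Your argument is correct and is exactly the intended one: the paper omits a proof of this corollary precisely because, once $s_1=s_2=t$ forces $s(\textbf{\emph{i}})=t$ for every $\textbf{\emph{i}}$, the modified singular value function (\ref{modsing}) reduces to $\alpha_1(\textbf{\emph{i}})^{t}\alpha_2(\textbf{\emph{i}})^{s-t}$ and the statement is a direct specialisation of Theorem \ref{main}. Nothing is missing.
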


The second corollary deals with the case where $s_1 = s_2 = 1$.  Some easily verified sufficient conditions for this to occur are given in Lemma \ref{full}.

\begin{cor} \label{coro2}
Let $F$ be a box-like self-affine set which satisfies the ROSC and is such that $s_1 = s_2 = 1$.  Then 
\[
\dim_\text{\emph{P}} F = \dim_\text{\emph{B}} F= d
\]
where $d$ is the affinity dimension (\ref{affinitydim}).
\end{cor}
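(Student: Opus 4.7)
The plan is to apply Theorem \ref{main} and then identify the resulting exponent with the affinity dimension $d$ by observing that, in the regime $s_1 = s_2 = 1$, the modified singular value function $\psi^s$ coincides with the classical singular value function $\phi^s$ on the interval $[1,2]$ where the critical exponent must lie.

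First I would note that because $s(\textbf{\emph{i}}) \in \{s_1, s_2\}$ for every $\textbf{\emph{i}} \in \mathcal{I}^*$, the assumption $s_1 = s_2 = 1$ forces $s(\textbf{\emph{i}}) = 1$ throughout. The definition (\ref{modsing}) therefore simplifies to $\psi^s(S_{\textbf{\emph{i}}}) = \alpha_1(\textbf{\emph{i}})\, \alpha_2(\textbf{\emph{i}})^{s-1}$. On the other hand, for $s \in [1,2]$ the classical singular value function (\ref{singvaluefunc}) reads $\phi^s(A) = \alpha_1 \, \alpha_2^{s-1}$. Hence $\psi^s \equiv \phi^s$ on $[1,2]$.

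Next, Theorem \ref{main} and the ROSC produce a unique $s \ge 0$ with $P(s) = 1$ satisfying $\dim_\text{P} F = \dim_\text{B} F = s$; I would then verify $s \in [1,2]$. The upper bound is trivial from $F \subseteq \mathbb{R}^2$, and the lower bound follows because $\pi_1$ is $1$-Lipschitz, giving $s = \dim_\text{B} F \ge \dim_\text{B} \pi_1(F) = s_1 = 1$.

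Once $s \in [1,2]$ is established, the identification $\psi^s = \phi^s$ yields $P(s) = \lim_{k \to \infty} \bigl( \sum_{\textbf{\emph{i}} \in \mathcal{I}^k} \phi^s(S_{\textbf{\emph{i}}}) \bigr)^{1/k} = 1$. Since the classical singular value function is submultiplicative under matrix composition, the sequence $\bigl( \sum_{\mathcal{I}^k} \phi^s(S_{\textbf{\emph{i}}}) \bigr)_k$ is submultiplicative in $k$, and the root test identifies $d$ as the unique exponent at which this $k$th root limit equals $1$ (uniqueness following from the strict monotonicity of $\phi^s$ in $s$, since each $\alpha_2(\textbf{\emph{i}}) < 1$). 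Therefore $d = s$, proving the corollary. I expect the only real subtlety to be the verification $s \ge 1$; this is the single place where the hypothesis $s_1 = s_2 = 1$ enters qualitatively (via projection-monotonicity of box dimension) rather than as a pointwise simplification of $\psi^s$.
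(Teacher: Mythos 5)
Your argument is correct and follows essentially the same route as the paper: observe that $s(\textbf{\emph{i}})\equiv 1$ makes $\psi^s$ coincide with $\phi^s$ on $[1,2]$, check that the root of $P(s)=1$ lies in that range, and identify it with the affinity dimension via the root test for the submultiplicative sums of $\phi^s$. The paper states these steps more tersely (``it is clear that the dimension lies in this range''), whereas you supply the explicit justification $1=s_1=\dim_\text{B}\pi_1(F)\leq\dim_\text{B}F\leq 2$, but the content is the same.
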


To prove Corollary \ref{coro2} simply observe that, if $s_1 = s_2 = 1$, then our modified singular value function (\ref{modsing}) coincides with the singular value function (\ref{singvaluefunc}) in the range $s \in [1,2]$.  Furthermore, it is clear that the dimension lies in this range and therefore the unique value of $s$ satisfying $P(s)=1$ is the affinity dimension.  The converse of Corollary \ref{coro2} is not true.  In particular, it is not true that if both $s_1$ and $s_2$ are strictly less than 1, then the packing dimension is strictly less than the affinity dimension (for example, some self-similar sets).  However, it is possible to give simple sufficient conditions for the packing dimension to drop from the affinity dimension.  For example, if both $s_1$ and $s_2$ are strictly less than $\min \{1, d\}$, where $d$ is the affinity dimension, and there exists a constant $\eta \in (0,1)$ such that for all $k \in \mathbb{N}$ and all $\textbf{\emph{i}} \in \mathcal{I}^k$, $\alpha_2(\textbf{\emph{i}}) \leq \eta^k \alpha_1(\textbf{\emph{i}})$, then the packing dimension of the attractor is strictly less than the affinity dimension.  To see this let $\epsilon = \min \{1, d\} - \max\{s_1,s_2\}>0$ and $d$ be the affinity dimension and note that
\[
P(d) = \inf_k \bigg(\sum_{\textbf{\emph{i}} \in \mathcal{I}^{k}} \psi^d(S_{\textbf{\emph{i}}}) \bigg)^{1/k} \leq \,\, \inf_k \Bigg(\sum_{\textbf{\emph{i}} \in \mathcal{I}^{k}} \phi^d(S_{\textbf{\emph{i}}}) \, \bigg(\frac{\alpha_2(\textbf{\emph{i}})}{\alpha_1(\textbf{\emph{i}})} \bigg)^\epsilon \Bigg)^{1/k} \leq \, \eta^\epsilon \,  < \, 1
\]
from which it follows that $\dim_\P F < d$.
\\ \\
Since $\overline{\dim}_\text{B} F \leq s_1+s_2$, it is clear that the solution of $P(s)=1$ always lies in the range $[0, s_1+s_2]$.  Even in the case where $s_1$ and $s_2$ can be computed it still may be very difficult to compute the solution of $P(s)=1$ explicitly.  However, since the solution lies in the submultiplicative region, it can be numerically estimated from above by considering the sequence $\{s_k\}_{k \in \mathbb{N}}$ where each $s_k$ is defined by $\Psi_k^{s_k} = 1$ and is an upper bound for the dimension.
\\ \\
We will now present one final corollary of Theorem \ref{main} which shows that for a certain class of box-like self-affine sets of separated type the dimension may be calculated explicitly due to the modified singular value function being multiplicative for all $s$ rather than sub- or supermultiplicative.

\begin{cor} \label{coro3}
Let $F$ be a box-like self-affine set of separated type which satisfies the ROSC.  Furthermore, assume that each map, $S_i$, in the IFS has singular values $\alpha_1(i) \geq \alpha_2(i)$ where the larger singular value, $\alpha_1(i)$, corresponds to contracting in the horizontal direction.  Then
\[
\dim_\text{\emph{P}} F = \dim_\text{\emph{B}} F= s
\]
where $s$ is the unique solution of
\[
\sum_{i \in \mathcal{I}} \alpha_1(i)^{s_1}\,  \alpha_2(i)^{s-s_1} = 1.
\]
Furthermore, if $s_1=1$, then $s$ is the affinity dimension.
\end{cor}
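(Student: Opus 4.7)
The plan is to invoke Theorem \ref{main} and show that, under the stated hypotheses, the pressure function $P$ collapses to a single sum because the modified singular value function becomes multiplicative on all of $\mathcal{I}^*$. Once this is done, the equation $P(s)=1$ reduces to the equation displayed in the statement, and Lemma \ref{P}(4) guarantees uniqueness.

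The key step, and the only place where care is required, is to verify that $s(\textbf{\emph{i}}) = s_1$ for every $\textbf{\emph{i}} \in \mathcal{I}^*$. Separated type gives $\mathcal{I}_B = \emptyset$, so ``horizontal stays horizontal'' is preserved under composition and $\textbf{\emph{i}} \in \mathcal{I}_A$ for every word. The second hypothesis says that for each $S_i$ the horizontal contraction factor is exactly $\alpha_1(i)$ and the vertical one is $\alpha_2(i)$; this too is preserved under composition, so for $\textbf{\emph{i}} = (i_1, \ldots, i_k)$,
\[
b(\textbf{\emph{i}}) = \prod_{j=1}^{k} \alpha_1(i_j), \qquad h(\textbf{\emph{i}}) = \prod_{j=1}^{k} \alpha_2(i_j),
\]
and in particular $b(\textbf{\emph{i}}) \ge h(\textbf{\emph{i}})$, whence $\alpha_1(\textbf{\emph{i}}) = b(\textbf{\emph{i}})$ and $\alpha_2(\textbf{\emph{i}}) = h(\textbf{\emph{i}})$. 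The case analysis defining $\pi_{\textbf{\emph{i}}}$ then forces $\pi_{\textbf{\emph{i}}} = \pi_1$, and therefore $s(\textbf{\emph{i}}) = s_1$ uniformly in $\textbf{\emph{i}}$.

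With this uniformity in hand, the modified singular value function factorises:
\[
\psi^s(S_{\textbf{\emph{i}}}) = \alpha_1(\textbf{\emph{i}})^{s_1}\,\alpha_2(\textbf{\emph{i}})^{s-s_1} = \prod_{j=1}^{k} \alpha_1(i_j)^{s_1}\,\alpha_2(i_j)^{s-s_1} = \prod_{j=1}^{k} \psi^s(S_{i_j}).
\]
Summing over $\textbf{\emph{i}} \in \mathcal{I}^k$ gives $\Psi_k^s = (\Psi_1^s)^k$, so $P(s) = \Psi_1^s = \sum_{i \in \mathcal{I}} \alpha_1(i)^{s_1}\,\alpha_2(i)^{s-s_1}$. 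Combined with Theorem \ref{main} and Lemma \ref{P}(4), this identifies $\dim_{\text{P}} F = \dim_{\text{B}} F$ as the unique solution of $\sum_{i} \alpha_1(i)^{s_1}\,\alpha_2(i)^{s-s_1} = 1$.

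For the final claim, suppose $s_1 = 1$. The solution $s$ satisfies $s \ge \dim_{\text{B}} \pi_1(F) = 1$ (projection does not increase box dimension) and $s \le s_1 + s_2 \le 2$ (as recorded just after Theorem \ref{main}), so $s \in [1,2]$. On this range the classical singular value function (\ref{singvaluefunc}) reads $\phi^s(A) = \alpha_1\,\alpha_2^{s-1}$, which coincides with $\psi^s$ when $s_1=1$. Applying the same multiplicativity inside the series in (\ref{affinitydim}) yields $\sum_{\textbf{\emph{i}} \in \mathcal{I}^k} \phi^s(S_{\textbf{\emph{i}}}) = (\sum_i \phi^s(S_i))^k$, so the double series defining the affinity dimension converges precisely when $\sum_i \phi^s(S_i) < 1$; hence the affinity dimension is the unique root in $[1,2]$ of $\sum_i \phi^s(S_i) = 1$, which is exactly our $s$. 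The proof is essentially bookkeeping; the one point to check carefully is that both hypotheses are genuinely used together to secure $s(\textbf{\emph{i}}) = s_1$ for every word, and it is this that underlies the multiplicativity of $\psi^s$.
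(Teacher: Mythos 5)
Your proposal is correct and follows essentially the same route as the paper: the paper's two-line proof simply cites case (i) of the proof of Lemma \ref{additive}(a1), which is precisely the observation you verify in detail --- that separated type plus the horizontal-dominance hypothesis force every word into the case $b(\textbf{\emph{i}})\geq h(\textbf{\emph{i}})$ with $\pi_{\textbf{\emph{i}}}=\pi_1$, making $\psi^s$ multiplicative so that $P(s)=\Psi_1^s$. Your added treatment of the final claim (reducing to the argument for Corollary \ref{coro2} via $s\in[1,2]$ and the coincidence of $\psi^s$ with $\phi^s$ there) matches what the paper leaves implicit.
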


\begin{proof}
It may be gleaned from the proof of Lemma \ref{additive} (a1), case (i), that, in the situation described above, the modified singular value function is multiplicative.  It follows that the unique solution of $P(s)=1$ satisfies $\Psi_1^s = 1$.
\end{proof}

Corollary \ref{coro3} is similar to Corollary 1 in \cite{fengaffine} but our result covers a much larger class of sets since we allow the maps in the IFS to have non-trivial reflectional and rotational components (whilst ensuring that $F$ is of separated type).  Although in a different context, a problem related to Corollary \ref{coro3} was studied in \cite{Hu}.  There the author proved a version of Bowen's formula for a class of non-conformal $C^2$ expanding maps for which the expansion is stronger in one particular direction.
\\ \\
The idea to study box-like self-affine sets came from \cite{falcoc}.  There the authors consider self-similar sets and, in particular, how varying the rotational or reflectional component of the mappings affects the symmetry of the attractor.  Their approach relies on various group theoretic techniques.  One thing to note is that, given the OSC, changing the rotational or reflectional component of the mappings in an IFS of similarities does not change the dimension.  As we have shown (and unsurprisingly) the situation is more complicated in the self-affine case, see the examples below.  It would be interesting to conduct an analysis similar to that found in \cite{falcoc} in the self-affine case with the added complication that one could consider changes in dimension as well as changes in the symmetry of the (self-affine) attractor.

\subsection{Dimensions of projections} \label{projections}

The dimension formula given in Section \ref{dimformula} depends on knowledge of $s_1$ and $s_2$, i.e., the dimensions of the projections of $F$ onto the horizontal and vertical axes, respectively.  \emph{A priori}, $s_1$ and $s_2$ are difficult to calculate explicitly, or even to obtain good estimates for.  In this section we examine this problem and show that it is possible to compute $s_1$ and $s_2$ explicitly in a number of cases.

\begin{lma} \label{proj2}
If $F$ is of separated type, then $\pi_1(F)$ and $\pi_2(F)$ are self-similar sets.  If $F$ is of non-separated type, then $\pi_1(F)$ and $\pi_2(F)$ are a pair of graph-directed self-similar sets and, moreover, the associated adjacency matrix for the graph-directed system is irreducible.  In this second case, it follows that $s_1=s_2$.
\end{lma}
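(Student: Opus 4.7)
The plan is to exhibit each $\pi_j \circ S_i$ as a similarity of $\mathbb{R}$ pulled back along one of the coordinate projections, and then read off a (graph-directed) self-similar structure for $\pi_1 F$ and $\pi_2 F$. Write $S_i = T_i L_i + t_i$ with $T_i = \text{diag}(a_i, b_i)$. The isometries $L_i$ preserving $[-1,1]^2$ form the dihedral group $D_4$ of order $8$: four of these preserve each coordinate axis and therefore take the form $L_i(x,y) = (\epsilon x, \epsilon' y)$ for signs $\epsilon, \epsilon' \in \{\pm 1\}$, while the remaining four swap the two axes and take the form $L_i(x,y) = (\epsilon y, \epsilon' x)$. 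These two classes are precisely the index sets $\mathcal{I}_A$ and $\mathcal{I}_B$. A direct computation then produces similarities $\sigma_{i,j} : \mathbb{R} \to \mathbb{R}$ (with contraction ratio $a_i$ or $b_i$) such that, for $j \in \{1,2\}$, $\pi_j \circ S_i = \sigma_{i,j} \circ \pi_j$ whenever $i \in \mathcal{I}_A$ and $\pi_j \circ S_i = \sigma_{i,j} \circ \pi_{3-j}$ whenever $i \in \mathcal{I}_B$.

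Applying $\pi_j$ to the fixed-point equation $F = \bigcup_{i \in \mathcal{I}} S_i(F)$ and using these identities yields the coupled system
\[
\pi_1 F \, = \, \bigcup_{i \in \mathcal{I}_A} \sigma_{i,1}(\pi_1 F) \,\cup\, \bigcup_{i \in \mathcal{I}_B} \sigma_{i,1}(\pi_2 F), \qquad \pi_2 F \, = \, \bigcup_{i \in \mathcal{I}_A} \sigma_{i,2}(\pi_2 F) \,\cup\, \bigcup_{i \in \mathcal{I}_B} \sigma_{i,2}(\pi_1 F).
\]
If $F$ is of separated type then $\mathcal{I}_B = \emptyset$ and this collapses to the statement that $\pi_j F$ is the attractor of the contracting similarity IFS $\{\sigma_{i,j}\}_{i \in \mathcal{I}_A}$, so $\pi_j F$ is self-similar by uniqueness of IFS attractors. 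If $F$ is of non-separated type, the coupled system is precisely the fixed-point equation for a graph-directed self-similar IFS on two vertices $V_1, V_2$, with self-loops at $V_j$ labelled by $\mathcal{I}_A$ (carrying the similarities $\sigma_{i,j}$) and crossing edges labelled by $\mathcal{I}_B$; uniqueness in the Mauldin--Williams framework identifies the pair $(\pi_1 F, \pi_2 F)$ with this graph-directed attractor. The adjacency matrix is
\[
A = \begin{pmatrix} |\mathcal{I}_A| & |\mathcal{I}_B| \\ |\mathcal{I}_B| & |\mathcal{I}_A| \end{pmatrix},
\]
and since non-separated type forces $|\mathcal{I}_B| \geq 1$, both off-diagonal entries are positive, so $V_1$ and $V_2$ are mutually reachable and $A$ is irreducible.

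For the equality $s_1 = s_2$, pick any $i \in \mathcal{I}_B$. The inclusion $\sigma_{i,1}(\pi_2 F) \subseteq \pi_1 F$ and the bi-Lipschitz nature of the similarity $\sigma_{i,1}$ give $\overline{\dim}_\text{B} \pi_2 F \leq \overline{\dim}_\text{B} \pi_1 F$ and the analogous inequality for the lower box dimension; the symmetric inclusion $\sigma_{i,2}(\pi_1 F) \subseteq \pi_2 F$ yields the reverse inequalities. Since the existence of $\dim_\text{B} \pi_j F$ has already been established, we conclude $s_1 = s_2$. No step is really an obstacle: the argument amounts to a case-check of the action of the finite group $D_4$ on the two coordinate projections, together with the standard uniqueness statement for graph-directed attractors.
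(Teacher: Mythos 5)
Your proposal is correct and follows essentially the same route as the paper: your similarities $\sigma_{i,j}$ are exactly the paper's maps $\tilde S_i^{a,b}=\pi_a\circ S_i\circ\mathbb{I}_b$, and the coupled system you derive is the paper's equations (\ref{gdifs1})--(\ref{gdifs2}), with irreducibility following in both cases from $\lvert\mathcal{I}_B\rvert\geq 1$. The only difference is cosmetic: you spell out the $D_4$ case analysis and give an explicit bi-Lipschitz inclusion argument for $s_1=s_2$, which the paper leaves to standard graph-directed theory.
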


We will prove Lemma \ref{proj2} in Section \ref{projectionproofs2}.  It follows from Lemma \ref{proj2} that the box dimensions of the projections exist and so $s_1$ and $s_2$ are well-defined.  The problem with calculating the dimension of $\pi_1 (F)$ and $\pi_2 (F)$ is that the IFSs eluded to in Lemma \ref{proj2} may not satisfy the open set condition (OSC), or graph-directed open set condition (GDOSC).  However, in certain cases we will be able to invoke the \emph{finite type} conditions introduced in \cite{finitetype1, finitetype2, finitetype} and generalised to the graph-directed situation in \cite{gdfinitetype}.  In this situation, despite the possible failure of the OSC or GDOSC we can view the projections as attractors of alternative IFSs or graph-directed IFSs where the necessary separation conditions \emph{are} satisfied.  We can then compute $s_1$ and $s_2$ using a standard formula, see, for example, \cite{techniques}.  An example of this will be given in Section \ref{finitetypeex}.
\\ \\
There is one further situation where, even if the previously mentioned finite type conditions are not satisfied, we can still compute $s_1$ and $s_2$.  In this case we will say that $F$ is of \emph{block type}.

\begin{lma}[block type] \label{full}
Let $H$ be any closed, path connected set which contains $F$ and is not contained in any vertical or horizontal line.  If
\begin{equation} \label{full1}
\pi_1 \Big( \bigcup_{i \in \mathcal{I}} S_i H\Big) = \pi_1(H)
\end{equation}
and
\begin{equation} \label{full2}
\pi_2 \Big( \bigcup_{i \in \mathcal{I}} S_i H\Big) = \pi_2(H),
\end{equation}
then $s_1=s_2=1$.
\end{lma}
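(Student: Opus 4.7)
The plan is to deduce that $\pi_1(F) = \pi_1(H)$ and $\pi_2(F) = \pi_2(H)$; since $H$ is closed, path-connected, and not contained in a vertical (respectively horizontal) line, each $\pi_j(H)$ is a non-degenerate interval, so both projections of $F$ will then be non-degenerate intervals, forcing $s_1 = s_2 = 1$. For the shrinking argument below one needs $H$ bounded; in the intended applications this holds automatically (e.g.\ $H = [0,1]^2$), and in any case one may replace a general $H$ by a bounded path-connected set that still satisfies the hypotheses.

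The central observation is that projections commute with each $S_i = T_i \circ L_i + t_i$ in a controlled way. Because the isometry $L_i$ either preserves or swaps the two coordinate axes, for each $i$ there exist $\ell(i) \in \{1,2\}$ and an affine contraction $f_{i,1}$ of $\mathbb{R}$ with $\pi_1 \circ S_i = f_{i,1} \circ \pi_{\ell(i)}$, and an analogous identity holds for $\pi_2 \circ S_i$. By composition, for every $\textbf{\emph{i}} \in \mathcal{I}^*$ there exist $\ell(\textbf{\emph{i}}) \in \{1,2\}$ and an affine contraction $f_{\textbf{\emph{i}}}$ with $\pi_1 \circ S_{\textbf{\emph{i}}} = f_{\textbf{\emph{i}}} \circ \pi_{\ell(\textbf{\emph{i}})}$. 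I then show by induction on $k$ that
\[
\pi_1(H) \;=\; \bigcup_{\textbf{\emph{i}} \in \mathcal{I}^k} \pi_1(S_{\textbf{\emph{i}}} H).
\]
The base $k=1$ is exactly (\ref{full1}). For the inductive step, each $\textbf{\emph{i}} \in \mathcal{I}^k$ satisfies
\[
\pi_1(S_{\textbf{\emph{i}}} H) \;=\; f_{\textbf{\emph{i}}}\!\left(\pi_{\ell(\textbf{\emph{i}})}(H)\right) \;=\; f_{\textbf{\emph{i}}}\!\left(\pi_{\ell(\textbf{\emph{i}})}\!\left(\bigcup_{j \in \mathcal{I}} S_j H\right)\right) \;=\; \bigcup_{j \in \mathcal{I}} \pi_1(S_{\textbf{\emph{i}}} S_j H),
\]
the middle equality invoking whichever of (\ref{full1}) and (\ref{full2}) matches $\ell(\textbf{\emph{i}})$; unioning over $\textbf{\emph{i}}$ completes the induction.

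For the conclusion, $\pi_1(S_{\textbf{\emph{i}}} H)$ has diameter at most $\alpha_{\max}^k \cdot \mathrm{diam}(H)$, and it meets $\pi_1(F)$ since $S_{\textbf{\emph{i}}}(F) \subseteq F \cap S_{\textbf{\emph{i}}}(H) \neq \emptyset$. Hence every point of $\pi_1(H)$ lies within $\alpha_{\max}^k \cdot \mathrm{diam}(H)$ of $\pi_1(F)$; letting $k \to \infty$ and using that $\pi_1(F)$ is closed yields $\pi_1(H) \subseteq \pi_1(F)$, and the reverse inclusion is immediate from $F \subseteq H$. Thus $\pi_1(F) = \pi_1(H)$ is a non-degenerate compact interval and $s_1 = 1$; the same argument with the roles of $\pi_1$ and $\pi_2$ swapped gives $s_2 = 1$. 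The delicate point — and the main obstacle to a clean write-up — is tracking how the index $\ell(\textbf{\emph{i}})$ evolves under composition of the isometric parts $L_{i_1}, \dots, L_{i_k}$, so that one knows which of (\ref{full1}) or (\ref{full2}) to invoke at each stage; happily, this is precisely the flip-tracking that already underpins the definition of $\pi_{\textbf{\emph{i}}}$ earlier in the paper.
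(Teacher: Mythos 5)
Your argument is correct and takes the same route the paper does, except that you carefully fill in what the paper dismisses in one line (namely that, by (\ref{full1}) and (\ref{full2}), $\pi_1(F)$ and $\pi_2(F)$ are intervals): you iterate the hypotheses to get $\pi_1(H)=\bigcup_{\textbf{\emph{i}}\in\mathcal{I}^k}\pi_1(S_{\textbf{\emph{i}}}H)$ and then use a shrinking argument to conclude $\pi_j(F)=\pi_j(H)$, a non-degenerate interval. Your observation that boundedness of $H$ is implicitly required is also well taken --- the statement as written would otherwise admit $H=\mathbb{R}^2$, for which the hypotheses hold trivially but the conclusion fails --- though your suggested fix of replacing a general $H$ by a bounded one is not always possible for exactly that reason; the honest repair is to add compactness (or boundedness) of $H$ to the hypotheses, as in the intended application $H=[0,1]^2$.
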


\begin{proof}
This follows immediately since, by (\ref{full1}) and (\ref{full2}), $\pi_1(F)$ and $\pi_2(F)$ are intervals.
\end{proof}

\section{Examples}

In order to illustrate our results we will now present two examples and compute the packing and box dimensions.  We will also examine what effect the rotational and reflectional components have on the dimension.  In both cases it will be clear that the ROSC is satisfied, taking $R = (0,1)^2$.  All rotations are taken to be clockwise about the origin and all numerical estimations were calculated in Maple using the method outlined at the end of Section \ref{results}.

\subsection{Non-separated type} \label{finitetypeex}

In this section we consider an example of a box-like self-affine set of non-separated type.  Let $F$ be the attractor of the IFS consisting of the maps which take $[0,1]^2$ to the 3 shaded rectangles on the left hand part of Figure 3, where the linear parts have been composed with: rotation by 270 degrees (top right); rotation by 90 degrees (bottom right); and reflection in the vertical axis (left).

\begin{figure}[H] 
	\centering
	\includegraphics[width=140mm]{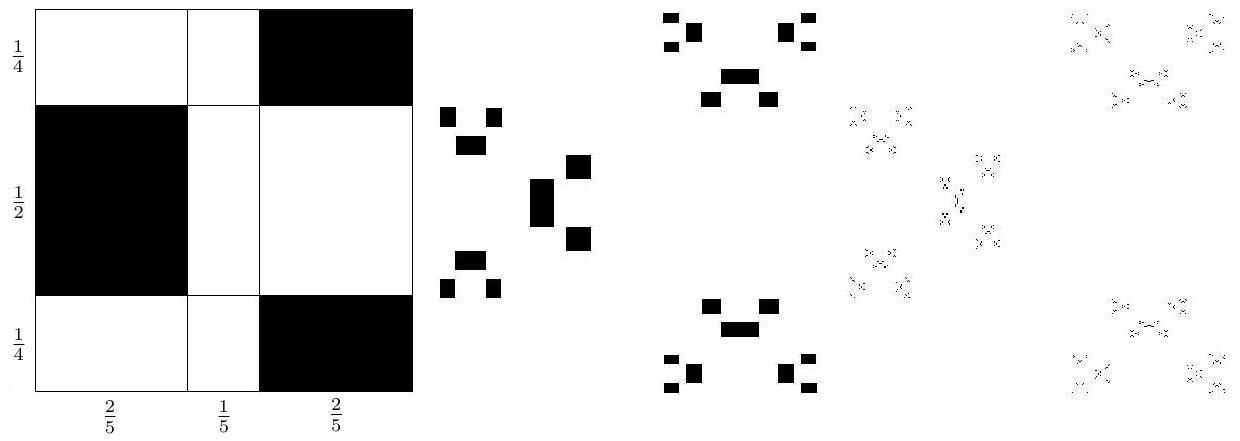}
\caption{Levels 1, 3 and 7 in the construction of $F$.}
\end{figure}
Here, $\pi_1(F)$ and $\pi_2(F)$ are a pair of graph-directed self-similar sets of finite type.  It is easy to see that in fact
\[
\pi_1(F) = \big(\tfrac{2}{5} - \tfrac{2}{5}\pi_1(F)\big) \cup \big(\tfrac{2}{5}\pi_2(F)+\tfrac{3}{5}\big)
\]
and
\[
\pi_2(F) = \big(\tfrac{1}{4}\pi_1(F)\big) \cup \big(\tfrac{1}{2}\pi_2(F)+\tfrac{1}{4}\big) \cup \big(1-\tfrac{1}{4}\pi_1(F)\big).
\]
with the GDOSC satisfied for this system.  The associated adjacency matrix is
\[
A^{(t)}=\left( \begin{array}{cc}
(\tfrac{2}{5})^t & (\tfrac{2}{5})^t \\ \\
2 \, (\tfrac{1}{4})^t & (\tfrac{1}{2})^t 
 \end{array} \right) 
\]
and solving $\rho\big(A^{(t)}\big) = 1$, where $\rho\big(A^{(t)}\big)$ is the spectral radius of $A^{(t)}$, for $t$ yields $s_1=s_2=:t \approx 0.890959$, see \cite{techniques}.  Theorem \ref{main} now gives that $\dim_\P F = \dim_\text{B} F= s$ where $s \geq 0$ is the unique solution of
\[
\lim_{k \to \infty} \bigg(\sum_{\textbf{\emph{i}} \in \mathcal{I}^k} \alpha_1(\textbf{\emph{i}})^{t}\,  \alpha_2(\textbf{\emph{i}})^{s-t}  \bigg)^{1/k} = 1,
\]
which was estimated numerically to be about 1.09.  If we considered the same construction but with no rotations or reflections then we would have a self-affine set of the type considered by Bara\'nski.  In this case, results  in \cite{baranski} give us that the box dimension is approximately 1.11349, which is certainly larger than the dimension we obtained for our construction.

\subsection{Block type}

In this section we consider an example of a box-like self-affine set of block type.  Let $F$ be the attractor of the IFS consisting of the maps $S_1$, $S_2$ and $S_3$ defined by
\[
S_1 = \left( \begin{array}{cc}
\tfrac{1}{2} & 0\\ 
0& \tfrac{3}{10}
 \end{array} \right) \circ R_1+ (0,1),
\]
\[
S_2 = \left( \begin{array}{cc}
\tfrac{1}{2} & 0\\
0& \tfrac{1}{5}
 \end{array} \right) \circ R_2+ (\tfrac{1}{4},\tfrac{7}{10})
\]
and
\[
S_3 = \left( \begin{array}{cc}
\tfrac{1}{4} & 0\\
0& \tfrac{3}{5}
 \end{array} \right) \circ R_3+ (1,0),
\]
where $R_1$ is reflection in the horizontal axis, $R_2$ is rotation by 90 degrees and $R_3$ is reflection in the vertical axis.
\begin{figure}[H]
	\centering
	\includegraphics[width=150mm]{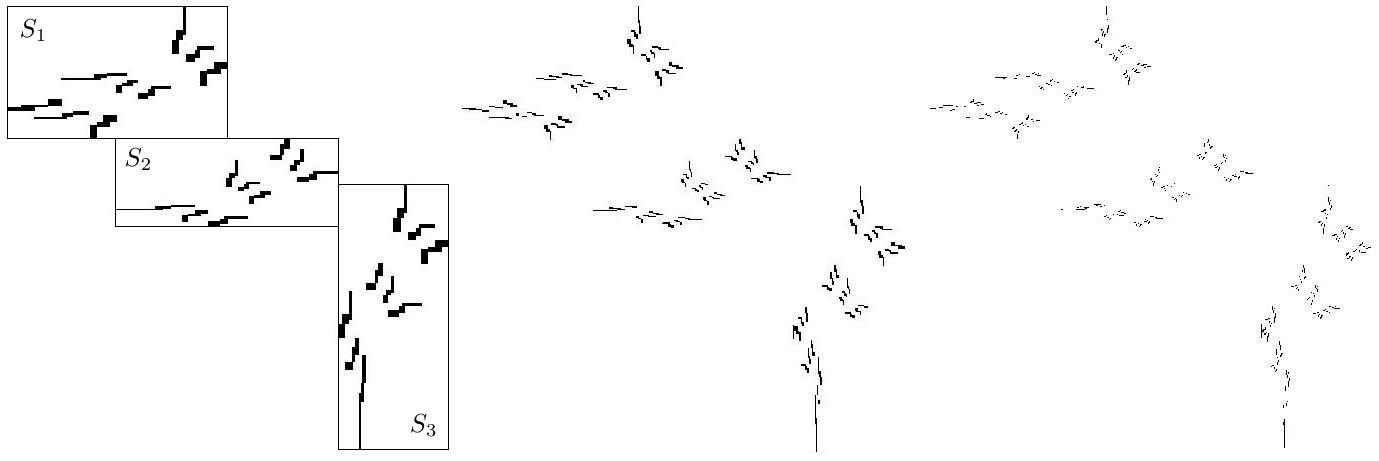}
\caption{Levels 4, 5 and 6 in the construction of $F$. The boxes in the first image on the left indicate the mappings.}
\end{figure}

It is clear that $F$ is of block type, taking $H = [0,1]^2$ in Lemma \ref{full}, and so $s_1=s_2=1$.  Theorem \ref{main} now gives that $\dim_\P F = \dim_\text{B} F= s$ where $s \geq 0$ is the unique solution of
\[
\lim_{k \to \infty} \bigg(\sum_{\textbf{\emph{i}} \in \mathcal{I}^k} \alpha_1(\textbf{\emph{i}})^1\,  \alpha_2(\textbf{\emph{i}})^{s-1}  \bigg)^{1/k} = 1.
\]
This was estimated numerically to be around 1.15, which, by Corollary \ref{coro2}, coincides with the affinity dimension.  Again, let us consider the same construction but with no rotational or reflectional components in the mappings.  In this case we have a self-affine set of the type considered by Feng and Wang and results in \cite{fengaffine} give that the box dimension is approximately 1.18405, which is again larger than for our construction.

\section{Proofs of preliminary lemmas} \label{prelim}

\subsection{Proof of Lemma \ref{additive}} \label{add}

We will first prove part (a) by a case by case analysis.  Part (b) will then follow easily.
\\ \\
\emph{Proof of (a).}
\\ \\
a1) Let $s \in [0, s_1+s_2)$ and let $\textbf{\emph{i}},\textbf{\emph{j}} \in \mathcal{I}^*$.  Firstly, assume that $F$ is of non-separated type.  It follows that $s_1 = s_2 = :t$.  We have
\begin{eqnarray*}
\psi^s(S_\textbf{\emph{i}} \circ S_\textbf{\emph{j}}) &=& \alpha_1( \textbf{\emph{i}} \textbf{\emph{j}})^{t}\alpha_2( \textbf{\emph{i}} \textbf{\emph{j}})^{s-t}\\ \\
&=& \Big(\alpha_1( \textbf{\emph{i}} \textbf{\emph{j}}) \, \alpha_2( \textbf{\emph{i}} \textbf{\emph{j}}) \Big)^{s-t} \, \alpha_1( \textbf{\emph{i}} \textbf{\emph{j}})^{2 t-s}\\ \\
&=& \Big(\alpha_1( \textbf{\emph{i}}) \, \alpha_2( \textbf{\emph{i}} ) \alpha_1(  \textbf{\emph{j}}) \, \alpha_2(  \textbf{\emph{j}}) \Big)^{s-t} \, \alpha_1( \textbf{\emph{i}} \textbf{\emph{j}})^{2 t-s} \\ \\
&\leq& \Big(\alpha_1( \textbf{\emph{i}}) \, \alpha_2( \textbf{\emph{i}} )  \Big)^{s-t}\, \Big( \alpha_1(  \textbf{\emph{j}})\alpha_2(  \textbf{\emph{j}}) \Big)^{s-t} \, \Big(\alpha_1( \textbf{\emph{i}}) \, \alpha_1( \textbf{\emph{j}})\Big)^{2 t-s} \qquad \text{since $2t-s>0$}\\ \\
&=& \psi^s(S_\textbf{\emph{i}}) \, \psi^s( S_\textbf{\emph{j}})
\end{eqnarray*}

proving (a1) in the non-separated case.  Secondly, assume that $F$ is of separated type and assume, in addition, that $b(\textbf{\emph{i}}) \geq h(\textbf{\emph{i}})$, recalling that $b(\textbf{\emph{i}})$ and $h(\textbf{\emph{i}})$ are the lengths of the base and height of the rectangle $S_{\textbf{\emph{i}}}[0,1]^2$ respectively.  The case where $b(\textbf{\emph{i}}) < h(\textbf{\emph{i}})$ is analogous.  We now have the following three cases:

\begin{itemize}
\item[(i)] $b(\textbf{\emph{j}}) \geq h(\textbf{\emph{j}})$ and $b(\textbf{\emph{ij}}) \geq h(\textbf{\emph{ij}})$;
\item[(ii)] $b(\textbf{\emph{j}}) < h(\textbf{\emph{j}})$ and $b(\textbf{\emph{ij}}) \geq h(\textbf{\emph{ij}})$;
\item[(iii)] $b(\textbf{\emph{j}}) < h(\textbf{\emph{j}})$ and $b(\textbf{\emph{ij}}) < h(\textbf{\emph{ij}})$.
\end{itemize}
The key property that we will utilise here is that, since $F$ is of separated type, $b(\textbf{\emph{ij}}) = b(\textbf{\emph{i}})\,b(\textbf{\emph{j}})$ and $h(\textbf{\emph{ij}}) = h(\textbf{\emph{i}})\,h(\textbf{\emph{j}})$.  Note that this precludes the case: $b(\textbf{\emph{j}}) \geq h(\textbf{\emph{j}})$ and $b(\textbf{\emph{ij}}) < h(\textbf{\emph{ij}})$.  To complete the proof of (a1) we will show that, in each of the above cases (i-iii), we have
\[
\frac{\psi^s(S_{\textbf{\emph{i}}} \circ S_{\textbf{\emph{j}}})}{ \psi^s(S_\textbf{\emph{i}})  \, \psi^s(S_{\textbf{\emph{j}}})} \leq 1.
\]
(i)  We have
\[
\frac{\psi^s(S_{\textbf{\emph{i}}} \circ S_{\textbf{\emph{j}}})}{ \psi^s(S_\textbf{\emph{i}})  \, \psi^s(S_{\textbf{\emph{j}}})} = \frac{ b(\textbf{\emph{ij}})^{s_1} h(\textbf{\emph{ij}})^{s-s_1}}{b(\textbf{\emph{i}})^{s_1} h(\textbf{\emph{i}})^{s-s_1} b(\textbf{\emph{j}})^{s_1} h(\textbf{\emph{j}})^{s-s_1}}
= 1.
\]

(ii)  Similarly 
\[
\frac{\psi^s(S_{\textbf{\emph{i}}} \circ S_{\textbf{\emph{j}}})}{ \psi^s(S_\textbf{\emph{i}})  \, \psi^s(S_{\textbf{\emph{j}}})} = \frac{ b(\textbf{\emph{ij}})^{s_1} h(\textbf{\emph{ij}})^{s-s_1}}{b(\textbf{\emph{i}})^{s_1} h(\textbf{\emph{i}})^{s-s_1} h(\textbf{\emph{j}})^{s_2} b(\textbf{\emph{j}})^{s-s_2}}
= \bigg(\frac{ b(\textbf{\emph{j}})}{h(\textbf{\emph{j}})} \bigg)^{s_1+s_2-s} \leq 1.
\]

(iii)  Finally 
\[
\frac{\psi^s(S_{\textbf{\emph{i}}} \circ S_{\textbf{\emph{j}}})}{ \psi^s(S_\textbf{\emph{i}})  \, \psi^s(S_{\textbf{\emph{j}}})} = \frac{ h(\textbf{\emph{ij}})^{s_2} b(\textbf{\emph{ij}})^{s-s_2}}{b(\textbf{\emph{i}})^{s_1} h(\textbf{\emph{i}})^{s-s_1} h(\textbf{\emph{j}})^{s_2} b(\textbf{\emph{j}})^{s-s_2}}
= \bigg(\frac{ h(\textbf{\emph{i}})}{b(\textbf{\emph{i}})} \bigg)^{s_1+s_2-s} \leq 1.
\]
The proofs of (a2) and (a3) are similar and, therefore, omitted.
\\ \\
\emph{Proof of (b).}
\\ \\
This follows easily by noting that, for all $k, l \in \mathbb{N}$, we have
\[
\Psi_{k+l}^s = \sum_{\textbf{\emph{i}} \in \mathcal{I}^{k+l}} \psi^s(S_{\textbf{\emph{i}}}) = \sum_{\textbf{\emph{i}} \in \mathcal{I}^{k}}  \sum_{\textbf{\emph{j}} \in \mathcal{I}^{l}}\psi^s(S_{\textbf{\emph{i}}} \circ S_{\textbf{\emph{j}}}) 
\]
and
\[
\Psi_{k}^s \, \Psi_{l}^s = \Bigg(\sum_{\textbf{\emph{i}} \in \mathcal{I}^{k}} \psi^s(S_{\textbf{\emph{i}}})\Bigg) \Bigg( \sum_{\textbf{\emph{i}} \in \mathcal{I}^{l}} \psi^s(S_{\textbf{\emph{j}}}) \Bigg) = \sum_{\textbf{\emph{i}} \in \mathcal{I}^{k}}  \sum_{\textbf{\emph{j}} \in \mathcal{I}^{l}}\psi^s(S_{\textbf{\emph{i}}})  \, \psi^s(S_{\textbf{\emph{j}}})
\]
and applying part (a). \hfill \qed

\subsection{Proof of Lemma \ref{P}}

(1)  Let $s, t \in [0, \infty)$.  We have
\[
P(s+t) = \lim_{k \to \infty} \Bigg(\sum_{\textbf{\emph{i}} \in \mathcal{I}^{k}}  \alpha_1 (\textbf{\emph{i}})^{ s(\textbf{\emph{i}})} \, \,  \alpha_2 (\textbf{\emph{i}})^ {s+t-s(\textbf{\emph{i}})} \Bigg)^{1/k}
\leq \lim_{k \to \infty} \Bigg(\alpha_{\max}^{ks}\sum_{\textbf{\emph{i}} \in \mathcal{I}^{k}}  \alpha_1 (\textbf{\emph{i}})^{ s(\textbf{\emph{i}})} \, \,  \alpha_2 (\textbf{\emph{i}})^ {t-s(\textbf{\emph{i}})} \Bigg)^{1/k} = \alpha_{\max}^{s} P(t).
\]
The proof of the left hand inequality is similar.  Furthermore, note that
\[
\infty \, \, > \, \, \inf_{k \geq 0} (\Psi^0_k)^{\frac{1}{k}} \, \, = \, \, P(0) \, \, = \, \, \lim_{k \to \infty} \Bigg(\sum_{\textbf{\emph{i}} \in \mathcal{I}^{k}}  \alpha_1 (\textbf{\emph{i}})^{ s(\textbf{\emph{i}})} \, \,  \alpha_2 (\textbf{\emph{i}})^ {-s(\textbf{\emph{i}})} \Bigg)^{1/k} \geq \, \,   \lim_{k \to \infty} \Bigg(\sum_{\textbf{\emph{i}} \in \mathcal{I}^{k}} 1 \Bigg)^{1/k} = \,\,\, \, \lvert \mathcal{I} \rvert
\]
and together with setting $t=0$ above gives the second chain of inequalities.
\\ \\
(2)  The continuity of $P$ follows immediately from (1).
\\ \\
(3)  Let $t, \varepsilon \geq 0$.  Since $P(t+\varepsilon), P(t) \in (0, \infty)$, by (1) we have
\[
\frac{P(t+\varepsilon)}{P(t)} \leq \alpha_{\max}^\varepsilon < 1
\]
and so $P$ is strictly decreasing on $[0,\infty)$.
\\ \\
(4)  It follows from (1) that $P(0) \geq \lvert \mathcal{I} \rvert>1$ and that $P(s)<1$ for sufficiently large $s$.  These facts, combined with parts (2) and (3), imply that there is a unique value of $s$ for which $P(s)=1$. \hfill \qed

\subsection{Proof of Lemma \ref{proj2}} \label{projectionproofs2}

Let $\mathbb{I}_1,\mathbb{I}_2 : [0,1] \to [0,1]^2$ be defined by
\[
\mathbb{I}_1(x) = (x,0)
\]
and
\[
\mathbb{I}_2(y) = (0,y).
\]
Also, for $i \in \mathcal{I}$ and $a,b \in \{1,2\}$, we define a contracting similarity mapping $\tilde S_{i}^{a,b}: [0,1] \to [0,1]$ by
\[
\tilde S_{i}^{a,b} = \pi_a \circ S_{i} \circ \mathbb{I}_b.
\]
For certain choices of $a$, $b$ and $i$ the image $\tilde S_{i}^{a,b}([0,1])$ is a singleton.  We will not be interested in these maps.  Also, let $X = \pi_1 (F)$ and $Y = \pi_2 (F)$.  It is clear that

\begin{equation} \label{gdifs1}
X = \bigg(\bigcup_{i \in \mathcal{I}_A} \tilde S_{i}^{1,1} (X) \bigg) \cup \bigg(\bigcup_{i \in \mathcal{I}_B} \tilde S_{i}^{1,2} (Y) \bigg)
\end{equation}

and

\begin{equation} \label{gdifs2}
Y = \bigg(\bigcup_{i \in \mathcal{I}_A} \tilde S_{i}^{2,2} (Y) \bigg) \cup \bigg(\bigcup_{i \in \mathcal{I}_B} \tilde S_{i}^{2,1} (X) \bigg).
\end{equation}

It follows that if $\mathcal{I}_B = \emptyset$, then $X$ and $Y$ are the self-similar attractors of the IFSs $\{ \tilde S_{i}^{1,1}\}_{i \in \mathcal{I}}$ and $\{ \tilde S_{i}^{2,2}\}_{i \in \mathcal{I}}$ respectively and if $\mathcal{I}_B \neq \emptyset$, then $X$ and $Y$ are a pair of graph-directed self-similar sets with an irreducible associated adjacency matrix defined by (\ref{gdifs1}--\ref{gdifs2}).  This proves Lemma \ref{proj2}. \hfill \qed

\section{Proof of Theorem \ref{main}} \label{mainproof}

We will now prove our main result, that the packing and box-counting dimensions of $F$ are equal to the unique $s$ which satisfies $P(s)=1$.  We will prove this in the box dimension case and it is well-known that, since $F$ is compact and every open ball centered in $F$ contains a bi-Lipshitz image of $F$, $\dim_\P F = \overline{\dim}_\text{B} F$, see \cite{falconer}.
\\ \\
Let $s \geq 0$ be the unique solution of $P(s)=1$.  For
\[
\textbf{\emph{i}} = (i_1,i_2, \dots, i_{k-1}, i_k)  \in \mathcal{I}^*
\]
let
\[
 \overline{\textbf{\emph{i}}} = (i_1,i_2, \dots, i_{k-1}) \in \mathcal{I}^* \cup \{ \omega\},
\]
where $\omega$ is the empty word.  Note that the map $S_\omega$ is taken to be the identity map, which has singular values both equal to 1.  For $\delta \in (0,1]$ we define the $\delta$-\emph{stopping}, $\mathcal{I}_\delta$, as follows:
\[
\mathcal{I}_\delta = \big\{\textbf{\emph{i}} \in \mathcal{I}^* : \alpha_2(\textbf{\emph{i}}) < \delta \leq \alpha_2( \overline{\textbf{\emph{i}}}) \big\}.
\]
Note that for $\textbf{\emph{i}} \in \mathcal{I}_\delta$ we have
\begin{equation} \label{stoppingest}
\alpha_{\min} \, \delta \leq \alpha_2(\textbf{\emph{i}}) < \delta.
\end{equation}

\begin{lma} \label{deltaconv}
Let $t \geq 0$.

\begin{itemize}

\item[(1)]  If $t>s$, then there exists a constant $K(t)<\infty$ such that
\[
\sum_{\textbf{{i}} \in \mathcal{I}_\delta}\psi^t(S_\textbf{{i}}) \leq K(t)
\]
for all $\delta \in (0,1]$.

\item[(2)] If $t< s$,  then there exists a constant $L(t)>0$ such that
\[
 \sum_{\textbf{{i}} \in \mathcal{I}_\delta}\psi^t(S_\textbf{{i}}) \geq L(t)
\]
for all $\delta \in (0,1]$.

\end{itemize}
\end{lma}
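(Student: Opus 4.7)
For part (1), $t > s$ combined with Lemma \ref{P}(3) gives $P(t) < 1$. Whichever regime of Lemma \ref{additive}(b) applies, a Fekete-type argument promotes the convergence $(\Psi_k^t)^{1/k} \to P(t)$ into constants $C = C(t) < \infty$ and $\rho = \rho(t) \in (0,1)$ with $\Psi_k^t \leq C\rho^k$ for every $k \geq 1$. Any $\mathbf{i} \in \mathcal{I}_\delta$ has $\alpha_{\min}^{|\mathbf{i}|} \leq \alpha_2(\mathbf{i}) < \delta$, so $|\mathbf{i}| \geq k_1(\delta) := \log(1/\delta)/\log(1/\alpha_{\min})$; since $\{\mathbf{i} \in \mathcal{I}_\delta : |\mathbf{i}| = k\} \subseteq \mathcal{I}^k$, partitioning by length yields
\[
\sum_{\mathbf{i} \in \mathcal{I}_\delta} \psi^t(S_\mathbf{i}) \; \leq \; \sum_{k \geq k_1(\delta)} \Psi_k^t \; \leq \; \frac{C\rho^{k_1(\delta)}}{1-\rho} \; \leq \; \frac{C}{1-\rho} \; =: \; K(t).
\]

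For part (2), fix $t < s$. The driving identity is $\psi^t(S_\mathbf{i}) = \psi^s(S_\mathbf{i})\,\alpha_2(\mathbf{i})^{t-s}$, which together with $\alpha_2(\mathbf{i}) < \delta$ and $t-s < 0$ gives $\sum_{\mathbf{i} \in \mathcal{I}_\delta} \psi^t(S_\mathbf{i}) \geq \delta^{t-s} \sum_{\mathbf{i} \in \mathcal{I}_\delta} \psi^s(S_\mathbf{i})$, so it suffices to establish the auxiliary bound $\sum_{\mathbf{i} \in \mathcal{I}_\delta} \psi^s(S_\mathbf{i}) \geq c(\eta)\,\delta^{\eta}$ for every small $\eta > 0$; then choosing $\eta \in (0, s-t)$ makes $t-s+\eta < 0$, hence $\delta^{t-s+\eta} \geq 1$ on $(0,1]$, and $L(t) = c(\eta)$ drops out. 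To prove the auxiliary bound, let $N = N(\delta)$ be the least integer with $\alpha_{\max}^N < \delta$, ensuring every $\mathbf{j} \in \mathcal{I}^N$ has a unique prefix $\mathbf{i}(\mathbf{j}) \in \mathcal{I}_\delta$; writing $\mathbf{j} = \mathbf{i}(\mathbf{j})\cdot\mathbf{j}'$ and invoking submultiplicativity at the critical exponent (Lemma \ref{additive}(a1), valid since $s \leq s_1 + s_2$),
\[
1 \; \leq \; \Psi_N^s \; = \; \sum_{\mathbf{i} \in \mathcal{I}_\delta}\sum_{\mathbf{j}' \in \mathcal{I}^{N-|\mathbf{i}|}} \psi^s(S_\mathbf{i} \circ S_{\mathbf{j}'}) \; \leq \; \bigg(\sum_{\mathbf{i} \in \mathcal{I}_\delta} \psi^s(S_\mathbf{i})\bigg) \max_{\mathbf{i} \in \mathcal{I}_\delta} \Psi_{N-|\mathbf{i}|}^s,
\]
where the leftmost inequality is $\Psi_N^s \geq P(s)^N = 1$ from Lemma \ref{additive}(b1). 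The subexponential bound $\Psi_k^s \leq C(\eta)(1+\eta)^k$ (a consequence of $(\Psi_k^s)^{1/k} \to P(s) = 1$), combined with $N - k_1(\delta) = O(\log(1/\delta))$, shows that the max on the right is at most $C'(\eta)\,\delta^{-\eta'}$ for some $\eta' = \eta'(\eta)$ that tends to $0$ with $\eta$, and rearranging produces the required auxiliary bound.

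The main obstacle is this final calibration: because the growth bound on $\Psi_k^s$ is only subexponential rather than bounded, shrinking $\eta$ reduces $\eta'$ only slowly while inflating the prefactor $C'(\eta)$; one has to verify that $\eta$ can nevertheless be chosen small enough to force $\eta' < s - t$, so that the combined exponent $t - s + \eta'$ remains negative and the uniform lower bound $L(t) > 0$ independent of $\delta$ actually materialises.
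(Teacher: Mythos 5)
Your part (1) is correct and is essentially the paper's argument (the paper simply bounds the sum over $\mathcal{I}_\delta$ by $\sum_{k}\Psi_k^t$, which converges by the root test since $P(t)<1$; your length-$k_1(\delta)$ refinement is harmless but unnecessary). The ``calibration'' issue you flag at the end of part (2) is also not the real problem: since $\eta'(\eta)\to 0$ as $\eta\to 0$ and $s-t>0$ is fixed, you can indeed choose $\eta$ with $\eta'<s-t$, and the blow-up of $C'(\eta)$ is absorbed into $L(t)$, which is allowed to depend on $t$.

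The genuine gap in part (2) is the parenthetical ``valid since $s\leq s_1+s_2$''. At this point in the paper that inequality is not available, and your whole argument hinges on it: you need $P(s)=\inf_k(\Psi_k^s)^{1/k}$ to get $\Psi_N^s\geq 1$, and you need submultiplicativity of $\psi^s$ to split $\psi^s(S_{\textbf{\emph{i}}}\circ S_{\textbf{\emph{j}}'})$; both hold only in the submultiplicative regime $s\leq s_1+s_2$, and in the regime $s>s_1+s_2$ the first inequality actually reverses ($\Psi_N^s\leq P(s)^N=1$). The paper is explicit that $s\leq s_1+s_2$ is deduced only \emph{retrospectively}, from the lower bound of Theorem \ref{main} under the ROSC together with $\dim_\text{B}F\leq s_1+s_2$ --- and that lower bound is proved \emph{using} Lemma \ref{deltaconv}(2), so invoking $s\leq s_1+s_2$ here is circular. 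It is not a removable technicality either: the lemma is stated with no separation hypothesis, and for heavily overlapping systems the root of $P(s)=1$ really can exceed $s_1+s_2$ (e.g.\ two similarities of ratio $9/10$ give $s=\log 2/\log(10/9)>2\geq s_1+s_2$), yet the lemma must still hold there. This is precisely why the paper splits part (2) into two cases keyed to $t$ rather than $s$: for $t\leq s_1+s_2$ it runs a contradiction argument at the exponent $t$ (assume $\sum_{\textbf{\emph{i}}\in\mathcal{I}_\delta}\psi^t(S_{\textbf{\emph{i}}})\leq 1$, iterate the stopping using submultiplicativity of $\psi^t$ to force $P(t)\leq 1$ and hence $t\geq s$), and for $t>s_1+s_2$ it runs a separate supermultiplicative argument with fixed-length blocks. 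To repair your proof you would either have to reproduce something like the paper's second case, or give an independent, non-circular proof that $s\leq s_1+s_2$ wherever part (2) is actually applied.
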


\begin{proof}
(1)  Let $t>s$ and $\delta \in (0,1]$.  We have
\[
\sum_{\textbf{\emph{i}} \in \mathcal{I}_\delta} \psi^t(S_\textbf{\emph{i}}) \leq \sum_{\textbf{\emph{i}}\in \mathcal{I}^*}\psi^t(S_\textbf{\emph{i}})= \sum_{k=1}^\infty \sum_{\textbf{\emph{i}} \in \mathcal{I}^k}\psi^t(S_\textbf{\emph{i}}) = \sum_{k=1}^\infty \Psi_k^t < \infty
\]
since $\lim_{k \to \infty} (\Psi_k^t)^{1/k} = P(t) < 1$.  The result follows, setting $K(t) = \sum_{k=1}^\infty \Psi_k^t$.
\\ \\
(2)  Let $t< s$.  Consider two cases according to whether $t$ is in the submultiplicative region $[0, s_1+s_2]$, or supermultiplicative region $(s_1+s_2, \infty)$.  We will be able to deduce retrospectively that $s \leq s_1+s_2$ and so the second case is, in fact, vacuous.  It would be possible to prove part (2) only in the submultiplicative case and then obtain the result that the dimension is given by $\min\{s, s_1+s_2\}$ but in order to conclude that the dimension is simply $s$, we include the proof in the supermultiplicative case.
\\ \\
(i) $0 \leq t \leq s_1+s_2$.  We remark that an argument similar to the following was used in \cite{affine}.
\\ \\
Let $\delta \in (0,1]$ and assume that
\begin{equation} \label{bounded}
\sum_{\textbf{\emph{i}} \in \mathcal{I}_\delta} \psi^t(S_\textbf{\emph{i}}) \leq 1.
\end{equation}
To obtain a contradiction we will show that this implies that $t \geq s$.  Let $k(\delta) = \max \{ \lvert \textbf{\emph{i}} \rvert : \textbf{\emph{i}} \in \mathcal{I}_\delta \}$, where $\lvert \textbf{\emph{i}} \rvert$ denotes the length of the string $\textbf{\emph{i}}$, and define $\mathcal{I}_{\delta, k}$ by
\[
\mathcal{I}_{\delta, k} = \big\{\textbf{\emph{i}}_1 \dots \textbf{\emph{i}}_m : \textbf{\emph{i}}_j  \in \mathcal{I}_\delta \text{ for all $j = 1, \dots, m$}, \quad \text{$\lvert \textbf{\emph{i}}_1 \dots \textbf{\emph{i}}_m\rvert \leq k$  but  $\lvert \textbf{\emph{i}}_1 \dots \textbf{\emph{i}}_{m} \textbf{\emph{i}}_{m+1}\rvert > k$  for some  $\textbf{\emph{i}}_{m+1} \in \mathcal{I}_\delta$}  \big\}.
\]
For all $\textbf{\emph{i}} \in \mathcal{I}^*$ we have, by the submultiplicativity of $\psi^t$,
\begin{eqnarray*}
 \sum_{\textbf{\emph{j}} \in \mathcal{I}_\delta} \psi^t(S_{\textbf{\emph{i}} \, \textbf{\emph{j}}}) &\leq&  \sum_{\textbf{\emph{j}} \in \mathcal{I}_\delta} \psi^t(S_\textbf{\emph{i}} ) \,  \psi^t(S_\textbf{\emph{j}}) \\ \\
&=& \psi^t(S_\textbf{\emph{i}}) \, \sum_{\textbf{\emph{j}} \in \mathcal{I}_\delta}   \psi^t(S_\textbf{\emph{j}})\\ \\
&<& \psi^t(S_\textbf{\emph{i}})
\end{eqnarray*}
by (\ref{bounded}).  It follows by repeated application of the above that, for all $k \in \mathbb{N}$,
\begin{equation} \label{bounded2old}
\sum_{\textbf{\emph{i}} \in \mathcal{I}_{\delta,k}} \psi^t(S_\textbf{\emph{i}}) \leq 1.
\end{equation}
Let $\textbf{\emph{i}} \in \mathcal{I}^{k}$ for some $k \in \mathbb{N}$.  It follows that $\textbf{\emph{i}} = \textbf{\emph{j}}_1 \, \textbf{\emph{j}}_2$ for some $\textbf{\emph{j}}_1 \in \mathcal{I}_{\delta,k}$ and some $\textbf{\emph{j}}_2 \in \mathcal{I}^* \cup \{\omega\}$ with $\lvert \textbf{\emph{j}}_2 \rvert \leq k(\delta)$ and by the submultiplicativity of $\psi^t$,
\[
\psi^t(S_\textbf{\emph{i}})\, = \,\psi^t(S_{\textbf{\emph{j}}_1 \, \textbf{\emph{j}}_2}) \, \leq \,  \psi^t(S_{\textbf{\emph{j}}_1}) \, \psi^t( S_{\textbf{\emph{j}}_2}) \, \leq \, c_{k(\delta)} \,  \psi^t(S_{\textbf{\emph{j}}_1} ),
\]
where $c_{k(\delta)} = \max\{ \psi^t( S_{\textbf{\emph{i}}}) : \lvert \textbf{\emph{i}} \rvert \leq k(\delta) \}< \infty$ is a constant which depends only on $\delta$.  Since there are at most $\lvert \mathcal{I}\rvert^{k(\delta)+1}$ elements $\textbf{\emph{j}}_2 \in \mathcal{I}^*\cup \{\omega\}$ with $\lvert \textbf{\emph{j}}_2 \rvert \leq k(\delta)$ we have
\[
\Psi^t_{k} \, \, = \, \, \sum_{\textbf{\emph{i}} \in \mathcal{I}^{k}} \psi^t(S_\textbf{\emph{i}})  \, \, \leq  \, \, \lvert \mathcal{I} \rvert^{k(\delta)+1} \,c_{k(\delta)} \,  \sum_{\textbf{\emph{i}} \in \mathcal{I}_{\delta, k}} \psi^t(S_\textbf{\emph{i}})
 \, \, \leq  \, \, \lvert \mathcal{I} \rvert^{k(\delta)+1} \,c_{k(\delta)}
\]
by (\ref{bounded2old}).  Since this is true for all $k \in \mathbb{N}$ we have
\[
P(t) = \lim_{k \to \infty} \big(\Psi^t_{k}\big)^{1/k} \leq 1
\]
from which it follows that $t \geq s$.  So, if $t \leq s_1+s_2$, then we may set $L(t) = 1$.
\\ \\
(ii) $t > s_1+s_2$.
\\ \\
Since $t<s$ it follows that $\sum_{\textbf{\emph{i}} \in \mathcal{I}^k} \psi^t(S_\textbf{\emph{i}}) \to \infty$ as $k \to \infty$.  Therefore, we may fix a $k \in \mathbb{N}$ such that
\begin{equation} \label{bigger1}
\sum_{\textbf{\emph{i}} \in \mathcal{I}^k} \psi^t(S_\textbf{\emph{i}}) \geq 1.
\end{equation}
Fix $\delta \in (0,1]$ and define
\begin{eqnarray*}
\mathcal{I}_{k,\delta} = \big\{\textbf{\emph{i}}_1 \dots \textbf{\emph{i}}_m &:&  \textbf{\emph{i}}_j  \in \mathcal{I}^k \text{ for all $j = 1, \dots, m$, } \quad \\ \\
&\,&\text{ $\alpha_2(\textbf{\emph{i}}_1 \dots \textbf{\emph{i}}_m) \geq \delta$ but $\alpha_2(\textbf{\emph{i}}_1 \dots \textbf{\emph{i}}_m \textbf{\emph{i}}_{m+1}) < \delta$ for some $\textbf{\emph{i}}_{m+1} \in \mathcal{I}^k$} \big\}.
\end{eqnarray*}
For all $\textbf{\emph{i}} \in \mathcal{I}^*$ we have, by the supermultiplicativity of $\psi^t$,
\begin{eqnarray*}
 \sum_{\textbf{\emph{j}} \in \mathcal{I}^k} \psi^t(S_{\textbf{\emph{i}} \, \textbf{\emph{j}}}) &\geq&  \sum_{\textbf{\emph{j}} \in \mathcal{I}^k} \psi^t(S_\textbf{\emph{i}}) \,  \psi^t(S_\textbf{\emph{j}})\\ \\
&=& \psi^t(S_\textbf{\emph{i}} ) \, \sum_{\textbf{\emph{j}} \in \mathcal{I}^k}   \psi^t(S_\textbf{\emph{j}})\\ \\
&\geq& \psi^t(S_\textbf{\emph{i}} )
\end{eqnarray*}
by (\ref{bigger1}).  It follows by repeated application of the above that
\begin{equation} \label{bounded2}
\sum_{\textbf{\emph{i}} \in \mathcal{I}_{k,\delta}} \psi^t(S_\textbf{\emph{i}}) \geq 1.
\end{equation}
Let $\textbf{\emph{i}} \in \mathcal{I}_{\delta}$.  It follows that $\textbf{\emph{i}} = \textbf{\emph{j}}_1 \textbf{\emph{j}}_2$ for some $\textbf{\emph{j}}_1 \in \mathcal{I}_{k,\delta}$ and some $\textbf{\emph{j}}_2 \in \mathcal{I}^*$.  Since $\alpha_2(\textbf{\emph{i}}) \geq \delta \, \alpha_{\min}$ by (\ref{stoppingest}) and $\alpha_2(\textbf{\emph{j}}_1) \leq \delta \alpha_{\min}^{-k}$ we have
\begin{equation} \label{lengthest}
\alpha_2(\textbf{\emph{j}}_1) \leq \alpha_2(\textbf{\emph{i}}) \alpha_{\min}^{-(k+1)} \leq \alpha_2(\textbf{\emph{j}}_1) \alpha_{\max}^{\lvert \textbf{\emph{j}}_2\rvert} \alpha_{\min}^{-(k+1)}
\end{equation}
which yields $\lvert \textbf{\emph{j}}_2 \rvert \leq (k+1)\frac{\log \alpha_{\min}}{\log \alpha_{\max}}$.  Setting $c_k  =  \min \Big\{ \psi^t(S_\textbf{\emph{i}}) : \lvert \textbf{\emph{i}}\rvert \leq (k+1) \frac{\log \alpha_{\min}}{\log \alpha_{\max}} \Big\} >0$ it follows from (\ref{lengthest}), (\ref{bounded2}) and the supermultiplicativity of $\psi^t$ that
\[
\sum_{\textbf{\emph{i}} \in \mathcal{I}_{\delta}} \psi^t(S_\textbf{\emph{i}}) \geq c_k  \sum_{\textbf{\emph{i}} \in \mathcal{I}_{k,\delta}}  \psi^t(S_\textbf{\emph{i}}) \geq c_k.
\]
We have now proved part (2) setting $L(t) = \min \{1, \, c_k\} = c_k$.  Note that although $L(t)$ appears to depend on $k$, recall that we fixed $k$ at the beginning of the proof of (2)(ii) and the choice of $k$ depended only on $t$.
\end{proof}

We are now ready to prove Theorem \ref{main}.  It follows immediately from the definition of box dimension that for all $\varepsilon>0$ there exists $C_\varepsilon \geq 1$ such that for all $\delta>0$ we have
\begin{equation} \label{simplebox1}
\tfrac{1}{C_\varepsilon} \, \delta^{-s_1 + \varepsilon/2} \leq N_\delta(\pi_1 F) \leq C_\varepsilon \, \delta^{-s_1 - \varepsilon/2}
\end{equation}
and
\begin{equation} \label{simplebox2}
\tfrac{1}{C_\varepsilon} \, \delta^{-s_2 + \varepsilon/2} \leq N_\delta(\pi_2 F) \leq C_\varepsilon \, \delta^{-s_2 - \varepsilon/2}.
\end{equation}
\textbf{Upper bound (assuming no separation conditions)}
\\ \\
Let $\varepsilon >0$, $\delta>0$ and suppose that, for each $\textbf{\emph{i}}\in \mathcal{I}_\delta$,  $\{U_{\textbf{\emph{i}}, j}\}_{j=1}^{N_\delta (F_{\textbf{\emph{i}}})}$ is a $\delta$-cover of $F_{\textbf{\emph{i}}}$.  Since $F = \bigcup_{\textbf{\emph{i}} \in \mathcal{I}_\delta} F_{\textbf{\emph{i}}}$ it follows that
\[
\bigcup_{\textbf{\emph{i}} \in \mathcal{I}_\delta}\bigcup_{j=1}^{N_\delta (F_{\textbf{\emph{i}}})} \{U_{\textbf{\emph{i}}, j}\}
\]
is a $\delta$-cover for $F$.  Whence,
\begin{eqnarray*}
0 \quad \leq \quad  \delta^{s+\varepsilon} N_\delta (F) &\leq& \delta^{s+\varepsilon} \sum_{\textbf{\emph{i}} \in \mathcal{I}_\delta} N_\delta \big(F_{\textbf{\emph{i}}}\big)\\ \\
&=&  \delta^{s+\varepsilon} \sum_{\textbf{\emph{i}} \in \mathcal{I}_\delta} N_{\delta/\alpha_1(\textbf{\emph{i}})} \big(\pi_{\textbf{\emph{i}}}F\big) \qquad \qquad \text{since $\alpha_2(\textbf{\emph{i}}) < \delta$}\\ \\
&\leq&   \delta^{s+\varepsilon} \sum_{\textbf{\emph{i}} \in \mathcal{I}_\delta} C_\varepsilon \,  \bigg(\frac{\delta}{\alpha_1(\textbf{\emph{i}})} \bigg)^{-s(\textbf{\emph{i}})-\varepsilon/2} \qquad \qquad \text{by (\ref{simplebox1}--\ref{simplebox2})}\\ \\
&\leq&  C_\varepsilon \, \alpha_{\min}^{-s-\epsilon} \sum_{\textbf{\emph{i}} \in \mathcal{I}_\delta}\alpha_1(\textbf{\emph{i}})^{s(\textbf{\emph{i}})+\varepsilon/2}\alpha_2(\textbf{\emph{i}})^{s+\varepsilon-s(\textbf{\emph{i}})-\varepsilon/2}\qquad \qquad \text{by (\ref{stoppingest})}\\ \\
&\leq&   C_\varepsilon \,\alpha_{\min}^{-s-\epsilon} \sum_{\textbf{\emph{i}} \in \mathcal{I}_\delta} \psi^{s+\epsilon/2}(\textbf{\emph{i}})\\ \\
&\leq&  C_\varepsilon \,\alpha_{\min}^{-s-\varepsilon} \, K\big(s+\tfrac{\varepsilon}{2}\big)
\end{eqnarray*}
by Lemma \ref{deltaconv} (1).  It follows that $\overline{\dim}_\text{B} F \leq s+\varepsilon$ and, since $\varepsilon>0$ was arbitrary, we have the desired upper bound.
\\ \\
\textbf{Lower bound (assuming the ROSC)}
\\ \\
Let $\varepsilon \in (0, s)$, $\delta>0$ and $U$ be any closed square of sidelength $\delta$.  Also, let $R$ be the open rectangle used in the ROSC and let $r_-$ denote the length of the shortest side of $R$.  Finally, let
\[
M = \min\big\{n \in \mathbb{N} : n \geq (\alpha_{\min}r_-)^{-1}+1\big\}.
\]
Since $\{S_\textbf{\emph{i}}(R)\}_{\textbf{\emph{i}}\in \mathcal{I}_\delta}$ is a collection of pairwise disjoint open rectangles each with shortest side having length at least $\alpha_{\min} \delta r_-$, it is clear that $U$ can intersect no more that $M^2$ of the sets $\{ F_{\textbf{\emph{i}}}\}_{\textbf{\emph{i}}\in \mathcal{I}_\delta}$.  It follows that, using the $\delta$-mesh definition of $N_\delta$, we have
\[
\sum_{\textbf{\emph{i}}\in \mathcal{I}_\delta} N_\delta \big(F_{\textbf{\emph{i}}}\big)  \leq M^2 \, N_\delta (F).
\]
This yields
\begin{eqnarray*}
\delta^{s-\varepsilon} N_\delta (F) &\geq&   \delta^{s-\varepsilon} \, \tfrac{1}{M^2}\sum_{\textbf{\emph{i}}\in \mathcal{I}_\delta} N_\delta \big(F_{\textbf{\emph{i}}}\big)\\ \\
&=&\delta^{s-\varepsilon} \, \tfrac{1}{M^2}   \sum_{\textbf{\emph{i}} \in \mathcal{I}_\delta} N_{\delta/\alpha_1(\textbf{\emph{i}})}\big(\pi_{\textbf{\emph{i}}}F\big) \qquad \qquad \text{since $\alpha_2(\textbf{\emph{i}}) < \delta$}\\ \\
&\geq& \delta^{s-\varepsilon} \, \tfrac{1}{M^2} \sum_{\textbf{\emph{i}} \in \mathcal{I}_\delta} \tfrac{1}{C_\varepsilon}\bigg(\frac{\delta}{\alpha_1(\textbf{\emph{i}})} \bigg)^{-s(\textbf{\emph{i}})+\varepsilon/2} \qquad \qquad \text{by (\ref{simplebox1}--\ref{simplebox2})} \\ \\
&\geq& \tfrac{1}{M^2 C_\varepsilon} \, \sum_{\textbf{\emph{i}} \in \mathcal{I}_\delta}  \alpha_2(\textbf{\emph{i}})^{s-\varepsilon} \, \alpha_{\min} \, \alpha_2(\textbf{\emph{i}})^{-s(\textbf{\emph{i}})+\varepsilon/2} \, \alpha_1(\textbf{\emph{i}})^{s(\textbf{\emph{i}})-\varepsilon/2}  \qquad \qquad \text{by (\ref{stoppingest})}\\ \\
&=& \tfrac{1}{M^2 C_\varepsilon} \, \alpha_{\min} \, \sum_{\textbf{\emph{i}} \in \mathcal{I}_\delta} \alpha_1(\textbf{\emph{i}})^{s(\textbf{\emph{i}})-\varepsilon/2} \,  \alpha_2(\textbf{\emph{i}})^{s-\varepsilon/2-s(\textbf{\emph{i}})}  \\ \\
&\geq&\tfrac{1}{M^2 C_\varepsilon} \, \alpha_{\min} \, \sum_{\textbf{\emph{i}} \in \mathcal{I}_\delta} \psi^{s-\varepsilon/2}(\textbf{\emph{i}})  \\ \\
&\geq& \tfrac{1}{M^2} \tfrac{1}{C_\varepsilon}\, \alpha_{\min} \, L\big(s-\tfrac{\varepsilon}{2}\big)
\end{eqnarray*}
by Lemma \ref{deltaconv} (2).  It follows that $\underline{\dim}_\text{B} F \geq s-\varepsilon$ and, since $\varepsilon \in (0, s)$ was arbitrary, we have the desired lower bound. \hfill $\qed$

\begin{centering}

\textbf{Acknowledgements}

\end{centering}

The author thanks Kenneth Falconer for many helpful discussions during the writing of this paper.  He was supported by an EPSRC Doctoral Training Grant.

\end{document}